\def \eig {\mathop{\rm eig}\nolimits}
\def \tr {\mathop{\rm tr}\nolimits}
\def \re {\mathop{\rm Re}\nolimits}
\def \Vol {\mathop{\rm Vol}\nolimits}
\def \etr {\mathop{\rm etr}\nolimits}
\def \diag {\mathop{\rm diag}\nolimits}
\def \build#1#2#3{\mathrel{\mathop{#1}\limits^{#2}_{#3}}}
\renewenvironment{abstract}
                 {\vspace{6pt}
                  \begin{center}
                  \begin{minipage}{5in}
                  \centerline{\textbf{Abstract}}
                  \noindent\ignorespaces
                 }
                 {\end{minipage}\end{center}}
\newtheorem{thm}{\textbf{Theorem}}[section]
\newtheorem{cor}{\textbf{Corollary}}[section]
\newtheorem{lem}{\textbf{Lemma}}[section]
\theoremstyle{definition}
\newtheorem{rem}{\textbf{Remark}}[section]
\title{\Large \textbf{Central matricvariate and matrix multivariate $T$ distributions}}
\author{
  \textbf{Jos\'e A. D\'{\i}az-Garc\'{\i}a} \thanks{Corresponding author\newline
   {\bf Key words.} Matricvariate; nonsingular central distributions; singular values; real, complex, quaternion
    and octonion random matrices; beta type I and II distributions; $T$ distribution.\newline
    2000 Mathematical Subject Classification. Primary 60E05, 62E15; secondary
    15A52}\\
  {\normalsize Department of Statistics and Computation} \\
  {\normalsize 25350 Buenavista, Saltillo, Coahuila, Mexico} \\
  {\normalsize E-mail: jadiaz@uaaan.mx} \\[2ex]
  \textbf{Ram\'on Guti\'errez J\'aimez} \\
  {\normalsize Department of Statistics and O.R.} \\
  {\normalsize University of Granada} \\
  {\normalsize Granada 18071, Spain}\\
  {\normalsize E-mail: rgjaimez@ugr.es}\\
}
\date{}
\begin{document}
\maketitle

\begin{abstract}
Several distributions are studied, simultaneously in the real, complex, quaternion and octonion
cases. Specifically, these are the central, nonsingular matricvariate and matrix multivariate
$T$ and beta type II distributions and the joint density of the singular values are obtained
for real normed division algebras.
\end{abstract}

\section{Introduction}\label{sec1}

The complex case has renewed interest in multivariate analysis in diverse areas of science and
technology, see \citet{me:91}, \citet{rva:05a} and \citet{mdm:06}, among many others. Moreover,
diverse works involving multivariate analysis have appeared in the context of the quaternion
case, see \citet{bh:00}, \citet{f:05}, \citet{lx:09}, among others. However, with respect to
the octonion case, only a few, theoretical results have been published, see \citet{f:05} and
\citet{k:65}. This lack of widespread interest may be, because as stated by \citet{b:02},
\textit{...there is still no proof that the octonions are useful for understanding the real
world}. Nevertheless, for the sake of completeness, we include results in the octonion case as
conjectures.

Using definitions, properties and notation from abstract algebra, we propose a unified approach
that enables the simultaneous study of the distribution of a random matrix in the real,
complex, quaternion and octonion cases, which is generically termed the, distribution of a
random matrix for real normed division algebras.

In particular, the matricvariate $T$ distribution has been studied by many authors in the real
case, see \citet{di:67}, \citet{bt:79}, \citet{p:82}, \citet{kn:04} and \citet{jdggj:09c},
among many others. The matricvariate $T$ distribution appears in the frequentist approach to
normal regression as the distribution of the Studentised error, see \citet{jdggj:06} and
\citet{kn:04}. In Bayesian conjugate-prior and diffuse-prior analysis for the same sampling
models, it appears as the marginal prior or posterior distribution of the unknown coefficients
matrix, and also as the predictive distribution of a future data array, see \citet{di:67},
\citet{bt:79}, \citet{p:82}, \citet{jdgrr:03}, \citet{fl:99} and \citet{kn:04}. It has been
applied in microeconomic modeling to describe the operation of a market for a particular
economic commodity and in macroeconomic modeling to describe the interrelations between a large
number of macroeconomic variables, as an application of the linear simultaneous equation model,
see \citet{kn:04}. In the complex case, the matricvariate $T$ distribution has been applied in
Bayesian estimation of a multivariate regression model, see \citet{kn:04}. No less important is
the role of the $T$ distribution, because if the matrix $\mathbf{T}$ has a $T$ distribution,
then the matrix $\mathbf{TT}^{*}$ (or $\mathbf{T}^{*}\mathbf{T}$) is distributed as beta type
II; and the distribution of the latter, in particular, plays a fundamental role in the MANOVA
model, see \citet{sk:79}, \citet{p:82}, and \citet{m:82}.

In this work, the nonsingular central matricvariate $T$ and the beta type II distributions and
some of their basic properties are studied, see Section \ref{sec2}. The matrix multivariate $T$
distribution and its corresponding beta type II distribution is studied in Section \ref{sec3}.
Finally, the joint densities of the singular values are derived in Section \ref{sec4}. We
emphasize that all these results are derived for real normed division algebras. Some concepts
and the notation of abstract algebra and Jacobians are summarised as an Appendix.

\section{Matricvariate $T$ distribution}\label{sec2}

We begin this section by distinguishing between \emph{matricvariate} and \emph{matrix
multivariate} (or \emph{matrix variate}) distributions. We say that the random matrix
$\mathbf{X}$ has a \emph{matricvariate distribution} if the kernel $g(\cdot)$ of its density is
written as a function solely in terms of the determinant operator $g(|\mathbf{X}|)$. In any
other case it is said that $\mathbf{X}$ has a \emph{matrix multivariate} distribution. The term
matricvariate distribution was introducing by \citet{di:67}, but the expression matrix-variate
distribution (or matrix variate distribution) was later used to describe any distribution of a
random matrix, see \citet{gv:93}, \citet{gn:00}, and references therein. Alternatively, the
term matrix multivariate (instead of matrix variate) has been used by \citet{gm:93}, and this
is the approach adopted in our paper.

\begin{thm}\label{teo1}
Let $\mathbf{T}\in {\mathcal L}_{m,n}^{\beta}$ defined as
$$
  \mathbf{T} = \mathbf{L}^{-1}\mathbf{Y} + \boldsymbol{\mu}
$$
where $\mathbf{L}$ is any square root of $\mathbf{V}$ such that $\mathbf{LL}^{*} = \mathbf{V
}\sim \mathcal{W}_{m}^{\beta}(\nu, \mathbf{\Xi})$, $\mathbf{\Xi} \in \mathfrak{P}_{m}^{\beta}$
and $\nu > \beta(m-1)$; independent of $\mathbf{Y} \sim \mathcal{N}_{m \times
n}^{\beta}(\mathbf{0}, \mathbf{I}_{m}\otimes \mathbf{\Sigma})$, $\mathbf{\Sigma} \in
\mathfrak{P}_{n}^{\beta}$. Then the density of $\mathbf{T}$ is
\begin{equation}\label{T}
    \frac{\Gamma_{m}^{\beta}[\beta(n+\nu)/2]}{\pi^{mn\beta/2}\Gamma_{m}^{\beta}[\beta \nu/2]
    |\mathbf{\Xi}|^{\beta \nu/2}|\mathbf{\Sigma}|^{\beta m/2}}
    |\mathbf{\Xi}^{-1} + (\mathbf{T} - \boldsymbol{\mu})\mathbf{\Sigma}^{-1}(\mathbf{T} -
    \boldsymbol{\mu})^{*}|^{-\beta(n+\nu)/2},
\end{equation}
which is termed the \emph{matricvariate $T$ distribution}\footnote{In the literature, it is
customary to use the expressions real matricvariate $T$ distribution, complex matricvariate $T$
distribution, quaternion matricvariate $T$ distribution and octonion matricvariate $T$
distribution distribution; here, however, we use simply matricvariate $T$ distribution as the
generic term.} and is denoted as
$$
  \mathbf{T} \sim \mathcal{T}_{m \times n}^{\beta}(\nu, \boldsymbol{\mu},
  \mathbf{\Xi}, \mathbf{\Sigma}).
$$
\end{thm}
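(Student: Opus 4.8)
The plan is to derive the density of $\mathbf{T}$ by starting from the joint density of the independent ingredients $\mathbf{Y}$ and $\mathbf{V}$, performing the change of variables dictated by $\mathbf{T}=\mathbf{L}^{-1}\mathbf{Y}+\boldsymbol{\mu}$, and then integrating out the nuisance matrix. This is the classical compounding (scale-mixture) argument that produces a $T$ distribution from a normal and a Wishart, carried out uniformly over the real normed division algebras indexed by $\beta$.

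First I would write down the two densities explicitly. Since $\mathbf{Y}\sim\mathcal{N}_{m\times n}^{\beta}(\mathbf{0},\mathbf{I}_m\otimes\mathbf{\Sigma})$, its density is proportional to $|\mathbf{\Sigma}|^{-\beta m/2}\etr\!\left(-\tfrac{1}{2}\mathbf{\Sigma}^{-1}\mathbf{Y}^{*}\mathbf{Y}\right)$ (up to the $\pi^{mn\beta/2}$ normalizer), and $\mathbf{V}\sim\mathcal{W}_m^{\beta}(\nu,\mathbf{\Xi})$ has the $\beta$-Wishart density with the Gamma factor $\Gamma_m^{\beta}[\beta\nu/2]$, the power $|\mathbf{V}|^{\beta(\nu-(m-1))/2-1}$ and the exponential $\etr\!\left(-\tfrac12\mathbf{\Xi}^{-1}\mathbf{V}\right)$. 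I would then fix $\mathbf{V}$ (equivalently $\mathbf{L}$) and treat $\mathbf{T}\mapsto\mathbf{Y}=\mathbf{L}(\mathbf{T}-\boldsymbol{\mu})$ as a linear change of variables; the relevant Jacobian is $(d\mathbf{Y})=|\mathbf{L}^{*}\mathbf{L}|^{\beta n/2}(d\mathbf{T})=|\mathbf{V}|^{\beta n/2}(d\mathbf{T})$, which I would cite from the Jacobian results collected in the Appendix. Substituting gives a joint density of $(\mathbf{T},\mathbf{V})$ in which $\mathbf{Y}^{*}\mathbf{Y}$ becomes $(\mathbf{T}-\boldsymbol{\mu})^{*}\mathbf{V}(\mathbf{T}-\boldsymbol{\mu})$.

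The key step is to integrate out $\mathbf{V}$. After collecting the exponential terms I obtain a factor $\etr\!\left(-\tfrac12\big[\mathbf{\Xi}^{-1}+(\mathbf{T}-\boldsymbol{\mu})\mathbf{\Sigma}^{-1}(\mathbf{T}-\boldsymbol{\mu})^{*}\big]\mathbf{V}\right)$, where I have used the trace identity $\tr\big(\mathbf{\Sigma}^{-1}(\mathbf{T}-\boldsymbol{\mu})^{*}\mathbf{V}(\mathbf{T}-\boldsymbol{\mu})\big)=\tr\big((\mathbf{T}-\boldsymbol{\mu})\mathbf{\Sigma}^{-1}(\mathbf{T}-\boldsymbol{\mu})^{*}\mathbf{V}\big)$ to absorb everything into a single Hermitian matrix multiplying $\mathbf{V}$. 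The $\mathbf{V}$-integrand is then exactly the kernel of a $\beta$-Wishart density with shape $\beta(n+\nu)/2$ and scale matrix $\mathbf{\Xi}^{-1}+(\mathbf{T}-\boldsymbol{\mu})\mathbf{\Sigma}^{-1}(\mathbf{T}-\boldsymbol{\mu})^{*}$, because the power of $|\mathbf{V}|$ adds up to $\beta((n+\nu)-(m-1))/2-1$. Using the matrix-gamma integral $\int_{\mathfrak{P}_m^{\beta}}\etr(-\mathbf{A}\mathbf{V})|\mathbf{V}|^{\beta a-\beta(m-1)/2-1}(d\mathbf{V})=\Gamma_m^{\beta}[\beta a]\,|\mathbf{A}|^{-\beta a}$ evaluated at $a=(n+\nu)/2$, the integration yields the factor $\Gamma_m^{\beta}[\beta(n+\nu)/2]$ together with the determinant raised to $-\beta(n+\nu)/2$, which is precisely the kernel in~\eqref{T}.

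The final step is bookkeeping of the constants: combining the two normalizers, the Jacobian factor, and the output of the matrix-gamma integral, I would check that the powers of $2$ cancel, that the $|\mathbf{\Xi}|$ and $|\mathbf{\Sigma}|$ powers combine into $|\mathbf{\Xi}|^{\beta\nu/2}|\mathbf{\Sigma}|^{\beta m/2}$ in the denominator, and that the surviving constant is $\Gamma_m^{\beta}[\beta(n+\nu)/2]/\big(\pi^{mn\beta/2}\Gamma_m^{\beta}[\beta\nu/2]\big)$. The main obstacle is not conceptual but technical: I must ensure that every Jacobian, the matrix-gamma normalizing constant, and the definition of $\Gamma_m^{\beta}$ are taken in their $\beta$-indexed forms so that the single computation is valid simultaneously for $\beta=1,2,4$ (and conjecturally $8$); the parameter constraints $\nu>\beta(m-1)$ and $\mathbf{\Xi},\mathbf{\Sigma}$ positive definite guarantee the integrals converge and the resulting density is well defined.
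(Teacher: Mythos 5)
Your proposal is correct and follows essentially the same route as the paper's own proof: write the joint Normal--Wishart density, apply the linear change of variables $\mathbf{Y}=\mathbf{L}(\mathbf{T}-\boldsymbol{\mu})$ with Jacobian $|\mathbf{V}|^{\beta n/2}$ from (\ref{lt}), collect the exponent into a single Hermitian matrix multiplying $\mathbf{V}$, and integrate $\mathbf{V}$ over $\mathfrak{P}_{m}^{\beta}$ via the matrix-gamma (Wishart-kernel) integral, with the same final bookkeeping of constants. The only differences are notational (the paper's convention carries a factor $\beta/2$ in the exponentials, producing the $(2\beta^{-1})$ powers that cancel exactly as you anticipate), and, just like the paper, you implicitly use $\mathbf{L}^{*}\mathbf{L}=\mathbf{V}$ (i.e., a Hermitian square root, or an invariance argument over square roots) when rewriting $\mathbf{Y}^{*}\mathbf{Y}$.
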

\begin{proof}
From \citet{k:84} and \citet{jdggj:09a,jdggj:10b}, the joint density of $\mathbf{V}$ and
$\mathbf{Y}$ is
$$
  \propto |\mathbf{V}|^{\beta(\nu-m+1)/2-1}\etr\{-\beta(\mathbf{\Xi}^{-1}
  \mathbf{V} + \mathbf{\Sigma}^{-1}\mathbf{Y}^{*}\mathbf{Y})/2\},
$$
where the constant of proportionality is
$$
  c = \frac{1}{(2\beta^{-1})^{\beta m\nu/2} \Gamma_{m}^{\beta}[\beta \nu/2] |\mathbf{\Xi}|^{\beta \nu/2}}
      \ \cdot \ \frac{1}{(2\pi\beta^{-1})^{\beta mn/2} |\mathbf{\Sigma}|^{\beta m/2}}.
$$
Making the change of variable  $\mathbf{Y} = \mathbf{L}\mathbf{T}$, where $\mathbf{V} =
\mathbf{LL}^{*}$, then by (\ref{lt})
$$
  (d\mathbf{V})(d\mathbf{Y}) = |\mathbf{LL}^{*}|^{\beta n/2}(d\mathbf{V})(d\mathbf{T})
    = |\mathbf{V}|^{\beta n/2}(d\mathbf{V})(d\mathbf{T}).
$$
Thus, the joint density of $\mathbf{V}$ and $\mathbf{T}$ is
$$
  \propto |\mathbf{V}|^{\beta(\nu+n-m+1)/2-1}
  \etr\{-\beta(\mathbf{\Xi}^{-1} + (\mathbf{T} - \boldsymbol{\nu})\mathbf{\Sigma}^{-1}(\mathbf{T} -
    \boldsymbol{\nu})^{*})/2\}.
$$
Finally, integrating over $\mathbf{V} \in \mathfrak{P}_{m}^{\beta}$, we have
$$
  (2\beta^{-1})^{\beta m(n+\nu)/2} \Gamma[\beta(n+\nu)/2]|\mathbf{\Xi}^{-1} +
  (\mathbf{T} - \boldsymbol{\nu})\mathbf{\Sigma}^{-1}(\mathbf{T} -
  \boldsymbol{\nu})^{*}|^{-\beta(n+\nu)/2},
$$
from which the desired result is obtained. \qed
\end{proof}
Now, observe that by \citet{di:67}
$$
  \frac{\Gamma_{m}^{\beta}[\beta(n+\nu)/2]}{\pi^{mn\beta/2}\Gamma_{m}^{\beta}[\beta \nu/2]} =
  \frac{\Gamma_{n}^{\beta}[\beta(n+\nu)/2]}{\pi^{mn\beta/2}\Gamma_{n}^{\beta}[\beta(n+\nu-m)/2]}
$$
and
$$
  |\mathbf{\Xi}^{-1} + (\mathbf{T} - \boldsymbol{\nu})\mathbf{\Sigma}^{-1}(\mathbf{T} -
  \boldsymbol{\nu})^{*}| = |\mathbf{\Xi}|^{-1}|\mathbf{\Sigma}|^{-1}|\mathbf{\Sigma} +
  (\mathbf{T} - \boldsymbol{\nu})^{*}\mathbf{\Xi}(\mathbf{T} -
  \boldsymbol{\nu})|,
$$
from which, alternatively, the density (\ref{T}) can be expressed as
\begin{equation}\label{T2}
    \frac{\Gamma_{n}^{\beta}[\beta(n+\nu)/2] |\mathbf{\Xi}|^{\beta n/2}|\mathbf{\Sigma}|^{\beta(n+\nu-m)/2}}
    {\pi^{mn\beta/2}\Gamma_{n}^{\beta}[\beta(n+\nu-m)/2]} |\mathbf{\Sigma} + (\mathbf{T} -
    \boldsymbol{\nu})^{*}\mathbf{\Xi}(\mathbf{T} - \boldsymbol{\nu})|^{-\beta(n+\nu)/2},
\end{equation}
\begin{cor}\label{cor3}
Let $\mathbf{T}\in {\mathcal L}_{m,n}^{\beta}$ defined as
$$
  \mathbf{T} = \mathbf{X}\mathbf{L}_{1}^{-1} + \boldsymbol{\mu}
$$
where $\mathbf{L}_{1}$ is any square root of $\mathbf{U}$ such that
$\mathbf{L}_{1}\mathbf{L}_{1}^{*} = \mathbf{U} \sim \mathcal{W}_{n}^{\beta}(\nu+n-m,
\mathbf{\Sigma}^{-1})$, $\mathbf{\Sigma} \in \mathfrak{P}_{n}^{\beta}$, independent of
$\mathbf{X} \sim \mathcal{N}_{m \times n}^{\beta}(\mathbf{0},\mathbf{\Xi}^{-1} \otimes
\mathbf{I}_{n})$, with $\mathbf{\Xi} \in \mathfrak{P}_{m}^{\beta}$. Then, $\mathbf{T} \sim
\mathcal{T}_{m \times n}^{\beta}(\nu, \boldsymbol{\mu}, \mathbf{\Xi}, \mathbf{\Sigma})$.
\end{cor}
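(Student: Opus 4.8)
The plan is to mirror the proof of Theorem \ref{teo1}, but to assemble the density in the alternative form (\ref{T2}) rather than (\ref{T}); since the two are equal by the identities recorded immediately after the theorem (following \citet{di:67}), producing (\ref{T2}) is enough to conclude that $\mathbf{T} \sim \mathcal{T}_{m \times n}^{\beta}(\nu, \boldsymbol{\mu}, \mathbf{\Xi}, \mathbf{\Sigma})$. First I would write down the joint density of $\mathbf{U}$ and $\mathbf{X}$. Since $\mathbf{U}\sim\mathcal{W}_{n}^{\beta}(\nu+n-m,\mathbf{\Sigma}^{-1})$, its Wishart density is proportional to $|\mathbf{U}|^{\beta(\nu-m+1)/2-1}\etr\{-\beta\mathbf{\Sigma}\mathbf{U}/2\}$ — the degrees of freedom $\nu+n-m$ are chosen precisely so that the exponent of $|\mathbf{U}|$ collapses to $\beta(\nu-m+1)/2-1$, and the scale $\mathbf{\Sigma}^{-1}$ produces $\mathbf{\Sigma}$ in the trace — while $\mathbf{X}\sim\mathcal{N}_{m\times n}^{\beta}(\mathbf{0},\mathbf{\Xi}^{-1}\otimes\mathbf{I}_{n})$ contributes $\etr\{-\beta\mathbf{X}^{*}\mathbf{\Xi}\mathbf{X}/2\}$. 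By independence the joint kernel is the product of these two factors.

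Next I would apply the change of variable $\mathbf{X}=(\mathbf{T}-\boldsymbol{\mu})\mathbf{L}_{1}$ with $\mathbf{U}=\mathbf{L}_{1}\mathbf{L}_{1}^{*}$. This is right multiplication of the $m\times n$ matrix $\mathbf{T}-\boldsymbol{\mu}$ by the $n\times n$ factor $\mathbf{L}_{1}$, so the dual of the Jacobian (\ref{lt}) used in Theorem \ref{teo1} gives $(d\mathbf{U})(d\mathbf{X})=|\mathbf{U}|^{\beta m/2}(d\mathbf{U})(d\mathbf{T})$; note that the exponent is now $m$, the ``other'' dimension, rather than $n$. Substituting and using the cyclic invariance of the trace, $\tr(\mathbf{X}^{*}\mathbf{\Xi}\mathbf{X})=\tr((\mathbf{T}-\boldsymbol{\mu})^{*}\mathbf{\Xi}(\mathbf{T}-\boldsymbol{\mu})\mathbf{U})$, the two exponential terms merge into $\etr\{-\beta(\mathbf{\Sigma}+(\mathbf{T}-\boldsymbol{\mu})^{*}\mathbf{\Xi}(\mathbf{T}-\boldsymbol{\mu}))\mathbf{U}/2\}$, and the power of $|\mathbf{U}|$ becomes $\beta(\nu-m+1)/2-1+\beta m/2=\beta(\nu+1)/2-1$.

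Finally I would integrate over $\mathbf{U}\in\mathfrak{P}_{n}^{\beta}$ by the matrix gamma integral for algebras of dimension $\beta$. Rewriting the power of $|\mathbf{U}|$ as $\beta(n+\nu)/2-\beta(n-1)/2-1$, the integral returns $\Gamma_{n}^{\beta}[\beta(n+\nu)/2]\,|\mathbf{\Sigma}+(\mathbf{T}-\boldsymbol{\mu})^{*}\mathbf{\Xi}(\mathbf{T}-\boldsymbol{\mu})|^{-\beta(n+\nu)/2}$, and collecting the normalizing constants of the Wishart and normal densities (which supply $|\mathbf{\Sigma}|^{\beta(n+\nu-m)/2}$ and $|\mathbf{\Xi}|^{\beta n/2}$ respectively) reproduces (\ref{T2}) exactly.

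The main obstacle — and the place where the specific hypotheses of the corollary are doing the work — is the bookkeeping of the exponents: one must check that the Wishart degrees of freedom $\nu+n-m$, the scale $\mathbf{\Sigma}^{-1}$, the right-multiplication Jacobian $|\mathbf{U}|^{\beta m/2}$, and the index $n$ of the gamma integral conspire to yield precisely the power $-\beta(n+\nu)/2$ together with the constant $\Gamma_{n}^{\beta}[\beta(n+\nu)/2]\,|\mathbf{\Xi}|^{\beta n/2}|\mathbf{\Sigma}|^{\beta(n+\nu-m)/2}/(\pi^{mn\beta/2}\Gamma_{n}^{\beta}[\beta(n+\nu-m)/2])$ that appears in (\ref{T2}). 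Everything else is a routine transcription of the argument for Theorem \ref{teo1} with the roles of the two matrix factors interchanged.
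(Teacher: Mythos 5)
Your proposal is correct and is essentially the paper's own argument: the paper's proof of Corollary \ref{cor3} is literally stated as ``a verbatim copy of the proof of Theorem \ref{teo1},'' and your write-up simply carries out that copy with the roles of the row and column factors interchanged (right multiplication by $\mathbf{L}_{1}$, Jacobian $|\mathbf{U}|^{\beta m/2}$ from (\ref{lt}), gamma integral over $\mathfrak{P}_{n}^{\beta}$), landing on the form (\ref{T2}) that the paper has already shown equals (\ref{T}). Your exponent bookkeeping --- the Wishart kernel $|\mathbf{U}|^{\beta(\nu-m+1)/2-1}\etr\{-\beta\mathbf{\Sigma}\mathbf{U}/2\}$, the combined power $\beta(\nu+1)/2-1=\beta(n+\nu)/2-\beta(n-1)/2-1$, and the cancellation of the $(2\beta^{-1})$ factors --- all checks out.
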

\begin{proof}
The proof is a verbatim copy of the proof of Theorem \ref{teo1}. \qed
\end{proof}

Now, assume that $\mathbf{T} \sim \mathcal{T}_{m \times n}^{\beta}(\nu, \boldsymbol{0},
\mathbf{I}_{m},\mathbf{I}_{n})$ with $n \geq m$ and let $\mathbf{F} \in
\mathfrak{P}_{m}^{\beta}$ defined as $\mathbf{F} = \mathbf{TT}^{*}$ then, under the conditions
of Theorem \ref{teo1} and Corollary \ref{cor3}, we have
\begin{eqnarray*}
% \nonumber to remove numbering (before each equation)
  \mathbf{F} &=& \mathbf{L}^{-1}\mathbf{YY}^{*}(\mathbf{L}^{-1})^{*} = \mathbf{L}^{-1}\mathbf{S}(\mathbf{L}^{-1})^{*} \\
   &=& \mathbf{X}\mathbf{U}^{-1}\mathbf{X}^{*},
\end{eqnarray*}
with $\mathbf{S}=\mathbf{YY}^{*} \sim \mathcal{W}_{m}^{\beta}(n, \mathbf{I}_{m})$, $n
> \beta(m-1)$. Thus:
\begin{thm}\label{teo2}
The density of $\mathbf{F}$ is
\begin{equation}\label{FII1}
    \frac{1}{\mathcal{B}_{m}^{\beta}[\beta \nu/2, \beta n/2]}|\mathbf{F}|^{\beta(n-m+1)/2-1}
    |\mathbf{I}_{m}+\mathbf{F}|^{-\beta(n+\nu)/2},
\end{equation}
where $\mathcal{B}_{m}^{\beta}[\cdot, \cdot]$ is given by (\ref{beta}) and $\mathbf{F}$ is said
to have a \emph{matricvariate beta type II distribution}.
\end{thm}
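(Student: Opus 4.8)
The plan is to derive the density of $\mathbf{F} = \mathbf{TT}^{*}$ starting from the matricvariate $T$ density in the special case $\boldsymbol{\mu} = \mathbf{0}$, $\mathbf{\Xi} = \mathbf{I}_m$, $\mathbf{\Sigma} = \mathbf{I}_n$. With these substitutions, the density \eqref{T} collapses to a constant times $|\mathbf{I}_m + \mathbf{T}\mathbf{T}^{*}|^{-\beta(n+\nu)/2}$, so the kernel already depends on $\mathbf{T}$ only through $\mathbf{F} = \mathbf{T}\mathbf{T}^{*}$. The core of the proof is therefore a change of variables from the full matrix $\mathbf{T} \in {\mathcal L}_{m,n}^{\beta}$ to the positive definite matrix $\mathbf{F} \in \mathfrak{P}_{m}^{\beta}$.

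First I would recall that for $n \geq m$, the map $\mathbf{T} \mapsto \mathbf{F} = \mathbf{T}\mathbf{T}^{*}$ has a standard Jacobian from the Appendix's collection of integral identities: integrating out the "angular" part (the Stiefel-manifold coordinates) over $\mathbf{T}$ with $\mathbf{T}\mathbf{T}^{*}$ held fixed produces a factor $|\mathbf{F}|^{\beta(n-m+1)/2 - 1}$ times the volume of the relevant Stiefel manifold $\mathcal{V}_{m,n}^{\beta}$. Concretely, I would invoke the identity that for a function of $\mathbf{T}\mathbf{T}^{*}$ alone,
$$
  \int_{{\mathcal L}_{m,n}^{\beta}} f(\mathbf{T}\mathbf{T}^{*})\,(d\mathbf{T})
  = \frac{\pi^{mn\beta/2}}{\Gamma_{m}^{\beta}[\beta n/2]}
    \int_{\mathfrak{P}_{m}^{\beta}} f(\mathbf{F})\,|\mathbf{F}|^{\beta(n-m+1)/2-1}\,(d\mathbf{F}),
$$
which is the real-normed-division-algebra version of the classical surface-integral formula. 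Applying this to $f(\mathbf{F}) = |\mathbf{I}_m + \mathbf{F}|^{-\beta(n+\nu)/2}$ immediately yields the announced kernel $|\mathbf{F}|^{\beta(n-m+1)/2-1}|\mathbf{I}_m + \mathbf{F}|^{-\beta(n+\nu)/2}$.

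It then remains to identify the normalizing constant. The constant from \eqref{T} specialized at $\mathbf{\Xi}=\mathbf{I}_m$, $\mathbf{\Sigma}=\mathbf{I}_n$ is $\Gamma_{m}^{\beta}[\beta(n+\nu)/2]/(\pi^{mn\beta/2}\Gamma_{m}^{\beta}[\beta\nu/2])$; multiplying by the $\pi^{mn\beta/2}/\Gamma_{m}^{\beta}[\beta n/2]$ from the Jacobian formula and recognizing the resulting product of gamma functions as a matrix beta function $\mathcal{B}_{m}^{\beta}[\beta\nu/2, \beta n/2]$ via \eqref{beta}, the powers of $\pi$ cancel and the constant assembles into $1/\mathcal{B}_{m}^{\beta}[\beta\nu/2,\beta n/2]$, giving \eqref{FII1}.

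The step I expect to be the main obstacle is verifying that the Stiefel-manifold Jacobian identity holds uniformly across all four algebras with the correct exponent $\beta(n-m+1)/2 - 1$ and the correct Stiefel volume constant $\pi^{mn\beta/2}/\Gamma_{m}^{\beta}[\beta n/2]$; once this unified Jacobian is in hand the rest is bookkeeping. I would lean on the Appendix's $(d\mathbf{T})$-to-$(d\mathbf{F})$ identity and the definition of $\mathcal{B}_{m}^{\beta}$ rather than re-deriving the surface measure, but care is needed to confirm that the parametrization $\mathbf{F} = \mathbf{L}^{-1}\mathbf{S}(\mathbf{L}^{-1})^{*}$ with $\mathbf{S} \sim \mathcal{W}_{m}^{\beta}(n,\mathbf{I}_m)$ is consistent, so that $\mathbf{F}$ is indeed the ratio of two independent Wishart-type matrices and hence genuinely beta type II.
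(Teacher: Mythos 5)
Your proposal is correct and is essentially the paper's own proof: the paper's one-line argument ``follows from (\ref{T}) by applying (\ref{vol}) and (\ref{w})'' is exactly your change of variables $\mathbf{T}\mapsto\mathbf{F}=\mathbf{TT}^{*}$ via Lemma \ref{lemW} followed by integration over $\mathcal{V}_{m,n}^{\beta}$, whose volume $2^{m}\pi^{mn\beta/2}/\Gamma_{m}^{\beta}[\beta n/2]$ combines with the $2^{-m}$ in (\ref{w}) to give precisely your integral identity. Your constant bookkeeping, assembling $\Gamma_{m}^{\beta}[\beta(n+\nu)/2]/\bigl(\Gamma_{m}^{\beta}[\beta\nu/2]\Gamma_{m}^{\beta}[\beta n/2]\bigr)$ into $1/\mathcal{B}_{m}^{\beta}[\beta\nu/2,\beta n/2]$ via (\ref{beta}), is likewise exactly what the paper's citation-level proof leaves implicit.
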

\begin{proof}
The proof follows from (\ref{T}) by applying (\ref{vol}) and (\ref{w}). \qed
\end{proof}

In addition, assume that $n < m$ and let  $\widetilde{\mathbf{F}} \in \mathfrak{P}_{n}^{\beta}$
defined as $\widetilde{\mathbf{F}} = \mathbf{T}^{*}\mathbf{T}$ then, under the conditions of
Theorem \ref{teo1} and Corollary \ref{cor3} we have
\begin{eqnarray*}
% \nonumber to remove numbering (before each equation)
  \widetilde{\mathbf{F}} &=& \mathbf{X}^{*}\mathbf{U}^{-1}\mathbf{X}\\
   &=& \mathbf{L}_{1}^{-1}\mathbf{X}^{*}\mathbf{X}(\mathbf{L}_{1}^{-1})^{*} =
   \mathbf{L}_{1}^{-1}\mathbf{S}_{1}(\mathbf{L}_{1}^{-1})^{*}
\end{eqnarray*}
with $\mathbf{S}_{1} = \mathbf{X}^{*}\mathbf{X} \sim \mathcal{W}_{n}^{\beta}(m,
\mathbf{I}_{n})$, $m
> \beta(n-1)$, Thus:
\begin{thm}\label{teo3}
$\widetilde{\mathbf{F}}$ has the density
\begin{equation}\label{FII2}
    \frac{1}{\mathcal{B}_{n}^{\beta}[\beta (\nu+n-m)/2, \beta m/2]}|\widetilde{\mathbf{F}}|^{\beta(m-n+1)/2-1}
    |\mathbf{I}_{n}+\widetilde{\mathbf{F}}|^{-\beta(n+\nu)/2-1}.
\end{equation}
Furthermore, we say that $\widetilde{\mathbf{F}}$ has a \emph{matricvariate beta type II
distribution}.
\end{thm}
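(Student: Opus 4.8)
The plan is to mirror the proof of Theorem~\ref{teo2}, but starting from the alternative representation (\ref{T2}) rather than (\ref{T}), since it is $\mathbf{T}^{*}\mathbf{T}$ (and not $\mathbf{TT}^{*}$) that appears there. Specialising (\ref{T2}) to $\boldsymbol{\mu}=\mathbf{0}$, $\mathbf{\Xi}=\mathbf{I}_{m}$, $\mathbf{\Sigma}=\mathbf{I}_{n}$ collapses the leading determinants to $1$ and leaves the density of $\mathbf{T}$ in the compact form $K\,|\mathbf{I}_{n}+\mathbf{T}^{*}\mathbf{T}|^{-\beta(n+\nu)/2}$ with $K=\Gamma_{n}^{\beta}[\beta(n+\nu)/2]/(\pi^{mn\beta/2}\Gamma_{n}^{\beta}[\beta(n+\nu-m)/2])$. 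Because $n<m$, the matrix $\mathbf{T}$ has full column rank almost surely and $\widetilde{\mathbf{F}}=\mathbf{T}^{*}\mathbf{T}\in\mathfrak{P}_{n}^{\beta}$ is positive definite, so the change of variables below is legitimate.

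Next I would factor $\mathbf{T}=\mathbf{H}_{1}\widetilde{\mathbf{F}}^{1/2}$, with $\mathbf{H}_{1}$ ranging over the Stiefel manifold $\mathcal{V}_{n,m}^{\beta}$ of $m\times n$ matrices satisfying $\mathbf{H}_{1}^{*}\mathbf{H}_{1}=\mathbf{I}_{n}$, and apply the Jacobian (\ref{w}), which expresses $(d\mathbf{T})$ as a constant times $|\widetilde{\mathbf{F}}|^{\beta(m-n+1)/2-1}(d\widetilde{\mathbf{F}})(\mathbf{H}_{1}^{*}d\mathbf{H}_{1})$. Since the integrand depends on $\mathbf{T}$ only through $\widetilde{\mathbf{F}}=\mathbf{T}^{*}\mathbf{T}$, integrating $\mathbf{H}_{1}$ over $\mathcal{V}_{n,m}^{\beta}$ contributes only the total volume given by (\ref{vol}), which carries the factor $\pi^{mn\beta/2}/\Gamma_{n}^{\beta}[\beta m/2]$ (the powers of $2$ coming from (\ref{w}) and (\ref{vol}) cancelling). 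Multiplying these pieces yields
\[
  g(\widetilde{\mathbf{F}}) \;=\; K\,\frac{\pi^{mn\beta/2}}{\Gamma_{n}^{\beta}[\beta m/2]}\,
  |\widetilde{\mathbf{F}}|^{\beta(m-n+1)/2-1}\,
  |\mathbf{I}_{n}+\widetilde{\mathbf{F}}|^{-\beta(n+\nu)/2}.
\]

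Finally I would absorb the constants. The $\pi^{mn\beta/2}$ cancels the one hidden in $K$, leaving the pure ratio $\Gamma_{n}^{\beta}[\beta(n+\nu)/2]/(\Gamma_{n}^{\beta}[\beta(n+\nu-m)/2]\,\Gamma_{n}^{\beta}[\beta m/2])$; recognising the denominator via (\ref{beta}) as $\mathcal{B}_{n}^{\beta}[\beta(\nu+n-m)/2,\beta m/2]$, with the two parameters summing to $\beta(n+\nu)/2$, gives exactly the beta type II kernel of (\ref{FII2}). The main obstacle is not conceptual but bookkeeping: getting the determinant exponent $\beta(m-n+1)/2-1$ right and tracking the compensating $2$'s and $\pi$'s, and in particular confirming that the exponent on $|\mathbf{I}_{n}+\widetilde{\mathbf{F}}|$ is $-(a+b)=-\beta(n+\nu)/2$, as the type II structure demands. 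One should also check that (\ref{w}) and (\ref{vol}) are invoked in the orientation appropriate to a tall matrix ($m>n$), i.e.\ integrating over $\mathcal{V}_{n,m}^{\beta}$ rather than over its transpose, since that is the one place an index slip would silently corrupt the normalising constant.
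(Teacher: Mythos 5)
Your proposal is correct and is essentially the paper's own argument: the paper's proof of Theorem \ref{teo3} simply states that it is the same as that of Theorem \ref{teo2} --- i.e.\ apply the Jacobian (\ref{w}) and the Stiefel volume (\ref{vol}) to the density of $\mathbf{T}$, naturally taken in the form (\ref{T2}) for the tall case, with the substitution rule (\ref{s}) offered as an alternative --- and you have supplied exactly these steps in the correct transposed orientation, with the constants tracked properly. One further point in your favour: the exponent $-\beta(n+\nu)/2$ that you derive on $|\mathbf{I}_{n}+\widetilde{\mathbf{F}}|$ is the correct one, and the extra $-1$ appearing in the paper's displayed equation (\ref{FII2}) is a typographical error, as both your normalising-constant check via (\ref{beta}) and the substitution (\ref{s}) applied to (\ref{FII1}) confirm.
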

\begin{proof}
The proof is the same as that given in Theorem \ref{teo2}. Alternatively, observe that density
(\ref{FII2}) can be obtained from density (\ref{FII1}) making the following substitutions, see
\citet[Eq. (7), p. 455]{m:82} and \citet[p. 96]{sk:79},
\begin{equation}\label{s}
    m \rightarrow n, \quad n \rightarrow m, \quad \nu \rightarrow \nu+n-m. \qquad \mbox{\qed}
\end{equation}
\end{proof}

Finally, assume that $\mathbf{M} \in \mathcal{L}_{m \times m}^{\beta}$ is any square root of
constant matrix $\mathbf{\Delta} = \mathbf{MM}^{*} \in \mathfrak{P}_{m}^{\beta}$. Also, define
$\mathbf{Z} = \mathbf{M}^{*}\mathbf{F}\mathbf{M}$, therefore:
\begin{cor} The density of $\mathbf{Z}$ is
$$
  \frac{|\mathbf{\Delta}|^{\beta\nu/2}}{\mathcal{B}_{m}^{\beta}[\beta \nu/2, \beta n/2]}|\mathbf{Z}|^{\beta(n-m+1)/2-1}
    |\mathbf{\Delta}+\mathbf{Z}|^{-\beta(n+\nu)/2}.
$$
$\mathbf{Z}$ is said to have a \emph{nonstandardised matricvariate beta type II distribution}.
\end{cor}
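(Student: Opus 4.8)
The plan is to obtain the density of $\mathbf{Z}$ directly from the matricvariate beta type II density (\ref{FII1}) of $\mathbf{F}$ by a single change of variables, since $\mathbf{Z} = \mathbf{M}^{*}\mathbf{F}\mathbf{M}$ is a fixed congruence transformation of the positive definite matrix $\mathbf{F}$. First I would invert the map, writing $\mathbf{F} = (\mathbf{M}^{*})^{-1}\mathbf{Z}\mathbf{M}^{-1}$, and record the associated Jacobian. For a congruence $\mathbf{Z} = \mathbf{M}^{*}\mathbf{F}\mathbf{M}$ on $\mathfrak{P}_{m}^{\beta}$, the standard Jacobian in the form used throughout the paper is $(d\mathbf{Z}) = |\mathbf{M}^{*}\mathbf{M}|^{\beta(m-1)/2+1}(d\mathbf{F})$, so that $(d\mathbf{F}) = |\mathbf{\Delta}|^{-[\beta(m-1)/2+1]}(d\mathbf{Z})$ once one takes $\mathbf{M}$ to be the Hermitian square root, whence $\mathbf{M}^{*}\mathbf{M} = \mathbf{M}\mathbf{M}^{*} = \mathbf{\Delta}$ and $|\mathbf{M}^{*}\mathbf{M}| = |\mathbf{\Delta}|$.

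Next I would rewrite each determinant factor of (\ref{FII1}) in terms of $\mathbf{Z}$. From $\mathbf{F} = (\mathbf{M}^{*})^{-1}\mathbf{Z}\mathbf{M}^{-1}$ and multiplicativity of the determinant one gets $|\mathbf{F}| = |\mathbf{\Delta}|^{-1}|\mathbf{Z}|$. For the second factor I would insert $\mathbf{I}_{m} = (\mathbf{M}^{*})^{-1}(\mathbf{M}^{*}\mathbf{M})\mathbf{M}^{-1}$ to factor
$$
  \mathbf{I}_{m} + \mathbf{F} = (\mathbf{M}^{*})^{-1}\bigl(\mathbf{M}^{*}\mathbf{M} + \mathbf{Z}\bigr)\mathbf{M}^{-1},
$$
so that $|\mathbf{I}_{m}+\mathbf{F}| = |\mathbf{\Delta}|^{-1}|\mathbf{\Delta}+\mathbf{Z}|$, again using $\mathbf{M}^{*}\mathbf{M} = \mathbf{\Delta}$. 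Substituting these two identities together with the Jacobian into (\ref{FII1}) then produces the factors $|\mathbf{Z}|^{\beta(n-m+1)/2-1}$ and $|\mathbf{\Delta}+\mathbf{Z}|^{-\beta(n+\nu)/2}$, along with an accumulated power of $|\mathbf{\Delta}|$.

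Finally I would collect the exponents of $|\mathbf{\Delta}|$ coming from the three sources, namely $-[\beta(n-m+1)/2-1]$ from $|\mathbf{F}|$, $+\beta(n+\nu)/2$ from $|\mathbf{I}_{m}+\mathbf{F}|$, and $-[\beta(m-1)/2+1]$ from the Jacobian, and check that they telescope to exactly $\beta\nu/2$, reproducing the stated constant $|\mathbf{\Delta}|^{\beta\nu/2}/\mathcal{B}_{m}^{\beta}[\beta\nu/2,\beta n/2]$. The only delicate point is the congruence factorization: the natural rewriting of $\mathbf{I}_{m}+\mathbf{F}$ yields $\mathbf{M}^{*}\mathbf{M}$ rather than $\mathbf{\Delta} = \mathbf{M}\mathbf{M}^{*}$, so the clean form $|\mathbf{\Delta}+\mathbf{Z}|$ requires $\mathbf{M}$ to be a Hermitian square root. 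For a general square root $\mathbf{M} = \mathbf{M}_{H}\mathbf{Q}$ with $\mathbf{Q}$ unitary, one checks that $\mathbf{Z}$ is replaced by the unitarily equivalent $\mathbf{Q}^{*}\mathbf{Z}\mathbf{Q}$, under which $|\mathbf{Z}|$ and $|\mathbf{\Delta}+\mathbf{Z}|$ are invariant, so the density is well defined independently of the choice. I expect the exponent bookkeeping in this last step to be the only place where care is genuinely needed.
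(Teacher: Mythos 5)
Your main computation is correct and is exactly the paper's route: the paper's entire proof is ``apply the congruence Jacobian (\ref{hlt}) to the density (\ref{FII1})'', and your exponent bookkeeping $-[\beta(n-m+1)/2-1]+\beta(n+\nu)/2-[\beta(m-1)/2+1]=\beta\nu/2$ is the verification the paper leaves implicit, and it checks out.

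One side remark in your last paragraph is, however, wrong. You claim that for a general square root $\mathbf{M}=\mathbf{M}_{H}\mathbf{Q}$ the replacement $\mathbf{Z}\mapsto\mathbf{Q}^{*}\mathbf{Z}\mathbf{Q}$ leaves $|\mathbf{\Delta}+\mathbf{Z}|$ invariant; it does not, since
$$
  |\mathbf{\Delta}+\mathbf{Q}^{*}\mathbf{Z}\mathbf{Q}| = |\mathbf{Q}\mathbf{\Delta}\mathbf{Q}^{*}+\mathbf{Z}|,
$$
which equals $|\mathbf{\Delta}+\mathbf{Z}|$ only when $\mathbf{Q}$ commutes with $\mathbf{\Delta}$ (e.g.\ $\mathbf{\Delta}$ a multiple of $\mathbf{I}_{m}$). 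Carrying your own factorization through for general $\mathbf{M}$, the density of $\mathbf{Z}=\mathbf{M}^{*}\mathbf{F}\mathbf{M}$ comes out with $\mathbf{M}^{*}\mathbf{M}=\mathbf{Q}^{*}\mathbf{\Delta}\mathbf{Q}$ in place of $\mathbf{\Delta}$ inside the determinant $|\mathbf{M}^{*}\mathbf{M}+\mathbf{Z}|^{-\beta(n+\nu)/2}$; since $|\mathbf{M}^{*}\mathbf{M}|=|\mathbf{\Delta}|$, the normalising constant $|\mathbf{\Delta}|^{\beta\nu/2}/\mathcal{B}_{m}^{\beta}[\beta\nu/2,\beta n/2]$ is unaffected, but the density function itself agrees with the stated one only when $\mathbf{M}^{*}\mathbf{M}=\mathbf{\Delta}$, i.e.\ when $\mathbf{M}$ is normal (in particular the Hermitian root). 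So the correct conclusion is not that the stated density is well defined for \emph{any} square root, but that for a general square root $\mathbf{Z}$ has a nonstandardised beta type II density with parameter matrix $\mathbf{Q}^{*}\mathbf{\Delta}\mathbf{Q}$ rather than $\mathbf{\Delta}$ --- a subtlety the paper's one-line proof also glosses over, and which you were right to probe even though your resolution of it fails.
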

\begin{proof}
The proof follows from (\ref{FII1}) by applying (\ref{hlt}). \qed
\end{proof}

Densities (\ref{FII1}) and (\ref{FII2}) have been studied by several authors in the real case,
see \citet{k:70} and \citet{sk:79}, \citet{gn:00}, among many others; and by \citet{j:64},
\citet{m:82}, \citet{jdggj:09b} and \citet{jdggj:08} and \citet{jdggj:10a}, in the noncentral,
doubly noncentral, singular and nonsingular and complex cases, among many other authors.

\section{Matrix multivariate $T$ distribution}\label{sec3}

\begin{thm}\label{teo4}
Let $\mathbf{T}_{1} = S^{-1/2}\mathbf{Y}+ \boldsymbol{\mu} \in \mathcal{L}_{m,n}^{\beta}$ where
$(S^{1/2})^{2} = S \sim \mathbf{\Gamma}^{\beta}(\nu, \rho)$ (that is, $S$ has a gamma
distribution with parameters $\nu$ and $\rho$), $\rho
> 0$, independent of $\mathbf{Y} \sim \mathcal{N}_{m \times n}^{\beta}(\mathbf{0},
\mathbf{I}_{m} \otimes \mathbf{\Sigma})$, $\mathbf{\Sigma} \in \mathfrak{P}_{n}^{\beta}$. Then
the density of $\mathbf{T}_{1}$ is
\begin{equation}\label{mmt}
    \frac{\Gamma^{\beta}_{1}[\beta(\nu+mn)/2] \rho^{\beta mn/2}}{\pi^{\beta mn/2}
    \Gamma^{\beta}_{1}[\beta \nu/2]|\mathbf{\Sigma}|^{\beta m/2}}
    \left[1+\rho\tr \mathbf{\Sigma}^{-1}(\mathbf{T}_{1}- \boldsymbol{\mu})^{*}
    (\mathbf{T}_{1}- \boldsymbol{\mu}) \right]^{-\beta(\nu +mn)/2},
\end{equation}
which is termed the \emph{matrix multivariate $T$ distribution} and is denoted as
$\mathbf{T}_{1} \sim \mathcal{MT}_{m \times n}^{\beta}(\nu,\boldsymbol{\mu}, \mathbf{I}_{m},
\mathbf{\Sigma})$.
\end{thm}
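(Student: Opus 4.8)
The plan is to follow the proof of Theorem~\ref{teo1} in structure, the only change being that the Wishart matrix $\mathbf{V}$ is now the scalar gamma variable $S$; since $S$ is a scalar, every matrix determinant appearing there collapses to a scalar power and the final determinant kernel degenerates to the scalar kernel $[1+\rho\tr(\cdots)]^{-\beta(\nu+mn)/2}$. First I would record the joint density of $S$ and $\mathbf{Y}$. By independence it is the product of the $\mathbf{\Gamma}^{\beta}(\nu,\rho)$ density of $S$, proportional to $S^{\beta\nu/2-1}\exp\{-\beta S/(2\rho)\}$ with normalising constant $(\beta/(2\rho))^{\beta\nu/2}/\Gamma_{1}^{\beta}[\beta\nu/2]$, and the matrix normal density of $\mathbf{Y}$, which carries the constant $(2\pi\beta^{-1})^{-\beta mn/2}|\mathbf{\Sigma}|^{-\beta m/2}$ and the factor $\etr\{-\beta\mathbf{\Sigma}^{-1}\mathbf{Y}^{*}\mathbf{Y}/2\}$ exactly as in Theorem~\ref{teo1}.

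Next I would perform the change of variable $\mathbf{Y}=S^{1/2}(\mathbf{T}_{1}-\boldsymbol{\mu})$ at fixed $S$. Writing this as the special case $\mathbf{L}=S^{1/2}\mathbf{I}_{m}$ of (\ref{lt}), its Jacobian is $(d\mathbf{Y})=|S\mathbf{I}_{m}|^{\beta n/2}(d\mathbf{T}_{1})=S^{\beta mn/2}(d\mathbf{T}_{1})$. Because $S^{1/2}$ is a real scalar it commutes through the product, so $\mathbf{Y}^{*}\mathbf{Y}=S(\mathbf{T}_{1}-\boldsymbol{\mu})^{*}(\mathbf{T}_{1}-\boldsymbol{\mu})$ and the $\etr$ factor reduces to the ordinary exponential $\exp\{-\beta S\,t/2\}$, where $t=\tr\mathbf{\Sigma}^{-1}(\mathbf{T}_{1}-\boldsymbol{\mu})^{*}(\mathbf{T}_{1}-\boldsymbol{\mu})$. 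Collecting the powers of $S$ coming from the gamma kernel and the Jacobian, the joint density of $S$ and $\mathbf{T}_{1}$ is then proportional to
$$
  S^{\beta(\nu+mn)/2-1}\exp\left\{-\frac{\beta S(1+\rho t)}{2\rho}\right\}.
$$

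Finally I would integrate out $S$ over $(0,\infty)$ by the scalar gamma integral $\int_{0}^{\infty}S^{a-1}\exp\{-bS\}\,dS=\Gamma_{1}^{\beta}[a]\,b^{-a}$ with $a=\beta(\nu+mn)/2$ and $b=\beta(1+\rho t)/(2\rho)$. This yields $\Gamma_{1}^{\beta}[\beta(\nu+mn)/2]\,(2\rho/\beta)^{\beta(\nu+mn)/2}(1+\rho t)^{-\beta(\nu+mn)/2}$, which already displays the announced kernel. Multiplying by the two normalising constants recorded above gives (\ref{mmt}).

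The only delicate point is the constant bookkeeping. One must verify that all powers of $2\beta^{-1}$ cancel, that the surviving powers of $\rho$ combine to exactly $\rho^{\beta mn/2}$, and that the two gamma functions assemble into $\Gamma_{1}^{\beta}[\beta(\nu+mn)/2]/\Gamma_{1}^{\beta}[\beta\nu/2]$. This is precisely where the rate $\beta/(2\rho)$ in the parameterization of $\mathbf{\Gamma}^{\beta}(\nu,\rho)$ is essential, since it is what forces the exponent to assemble into $1+\rho t$ rather than $1+t/\rho$; once the normal convention of Theorem~\ref{teo1} is fixed, this rate is the unique choice reproducing (\ref{mmt}), and everything else is the scalar shadow of the matrix computation already performed there.
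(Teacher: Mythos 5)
Your proposal is correct and follows essentially the same route as the paper: the paper's proof of Theorem~\ref{teo4} likewise writes the joint density of $S$ and $\mathbf{Y}$ with the same constant $c$, applies the Jacobian $(ds)(d\mathbf{Y})=s^{\beta mn/2}(ds)(d\mathbf{T}_{1})$ from (\ref{lt}), and then integrates out $S$ ``analogously to Theorem~\ref{teo1}'', i.e.\ by the scalar gamma integral you carry out explicitly. Your write-up in fact fills in details the paper leaves implicit (the final constant bookkeeping) and quietly fixes a typo in the paper's displayed joint density, which omits the factor $\mathbf{\Sigma}^{-1}$ inside $\tr\mathbf{YY}^{*}$ even though the constant $c$ presupposes it.
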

\begin{proof}
The joint density of $S$ and $\mathbf{Y}$ is
$$
  \propto s^{\beta \nu/2 -1} \etr\{-\beta (s/\rho + \tr \mathbf{YY}^{*})/2\},
$$
where the constant of proportionality is
$$
  c = \frac{1}{(2\beta^{-1})^{\beta \nu/2} \Gamma_{1}^{\beta}[\beta \nu/2] \rho^{\beta \nu/2}}
      \ \cdot \ \frac{1}{(2\pi\beta^{-1})^{\beta mn/2} |\mathbf{\Sigma}|^{\beta m/2}}.
$$
Noting that by (\ref{lt})
$$
  (ds)(d\mathbf{Y})= s^{\beta mn/2}(ds)(d\mathbf{T}_{1}),
$$
the desired result is obtained analogously to the proof of Theorem \ref{teo1}. \qed
\end{proof}
\begin{cor}
Assume that $\mathbf{T}_{1} \sim \mathcal{MT}_{m \times n}^{\beta}(\nu,\boldsymbol{\mu},
\mathbf{I}_{m},\mathbf{I}_{n})$, and let $\mathbf{M}$ and $\mathbf{N}$ be any square root of
the constant matrices $\mathbf{\Delta} = \mathbf{M}\mathbf{M}^{*}\in \mathfrak{P}_{m}^{\beta}$
and $\mathbf{\Lambda}= \mathbf{N}\mathbf{N}^{*}\in \mathfrak{P}_{n}^{\beta}$, respectively.
Also let $(\mathbf{M}^{*})^{-1}\mathbf{T}_{1}\mathbf{N}^{-1} + \boldsymbol{\mu} =
\mathbf{Q}_{1} \in \mathcal{L}_{m,n}^{\beta}$, where $\boldsymbol{\mu} \in
\mathcal{L}_{m,n}^{\beta}$ is constant. Then,
$$
    \frac{\Gamma^{\beta}_{1}[\beta(\nu+mn)/2]}{\pi^{\beta mn/2}\Gamma^{\beta}_{1}[\beta \nu/2]}
    |\mathbf{\Delta}|^{\beta n/2} |\mathbf{\Lambda}|^{\beta
    m/2}\left[1-\tr \mathbf{\Delta}(\mathbf{Q}_{1} - \boldsymbol{\mu})\mathbf{\Lambda}
    (\mathbf{Q}_{1} - \boldsymbol{\mu})^{*}\right]^{-\beta(\nu +mn)/2},
$$
Hence, we write $\mathbf{Q}_{1} \sim \mathcal{MT}_{m \times n}^{\beta}(\nu, \boldsymbol{\mu},
\mathbf{\Delta}, \mathbf{\Lambda})$.
\end{cor}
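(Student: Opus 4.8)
The plan is to obtain the density of $\mathbf{Q}_{1}$ from the standardised density (\ref{mmt}) of $\mathbf{T}_{1} \sim \mathcal{MT}_{m \times n}^{\beta}(\nu, \boldsymbol{0}, \mathbf{I}_{m}, \mathbf{I}_{n})$ (so $\mathbf{\Sigma} = \mathbf{I}_{n}$ and $\rho = 1$) by a single nonsingular affine change of variable, in direct analogy with the nonstandardised matricvariate beta case derived after Theorem \ref{teo3}. First I would invert the defining relation to write
\begin{equation*}
  \mathbf{T}_{1} = \mathbf{M}^{*}(\mathbf{Q}_{1} - \boldsymbol{\mu})\mathbf{N},
\end{equation*}
which is well defined and invertible because $\mathbf{M}$ and $\mathbf{N}$ are square roots of the positive definite matrices $\mathbf{\Delta} \in \mathfrak{P}_{m}^{\beta}$ and $\mathbf{\Lambda} \in \mathfrak{P}_{n}^{\beta}$, and $\boldsymbol{\mu}$ is a fixed constant.

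Next I would evaluate the Jacobian of this map with the linear-transformation result (\ref{lt}), applied once on the left factor $\mathbf{M}^{*}$ (of order $m$) and once on the right factor $\mathbf{N}$ (of order $n$). This gives
\begin{equation*}
  (d\mathbf{T}_{1}) = |\mathbf{M}\mathbf{M}^{*}|^{\beta n/2}\,|\mathbf{N}^{*}\mathbf{N}|^{\beta m/2}\,(d\mathbf{Q}_{1})
  = |\mathbf{\Delta}|^{\beta n/2}\,|\mathbf{\Lambda}|^{\beta m/2}\,(d\mathbf{Q}_{1}),
\end{equation*}
where I use $|\mathbf{N}^{*}\mathbf{N}| = |\mathbf{N}\mathbf{N}^{*}| = |\mathbf{\Lambda}|$. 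These are precisely the two determinant factors $|\mathbf{\Delta}|^{\beta n/2}$ and $|\mathbf{\Lambda}|^{\beta m/2}$ that appear in the target density.

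It then remains to rewrite the kernel. Substituting $\mathbf{T}_{1} = \mathbf{M}^{*}(\mathbf{Q}_{1} - \boldsymbol{\mu})\mathbf{N}$ into the trace in (\ref{mmt}) and invoking the cyclic invariance of the trace yields
\begin{equation*}
  \tr \mathbf{T}_{1}^{*}\mathbf{T}_{1}
  = \tr\bigl[\mathbf{N}^{*}(\mathbf{Q}_{1} - \boldsymbol{\mu})^{*}\mathbf{\Delta}(\mathbf{Q}_{1} - \boldsymbol{\mu})\mathbf{N}\bigr]
  = \tr\bigl[\mathbf{\Delta}(\mathbf{Q}_{1} - \boldsymbol{\mu})\mathbf{\Lambda}(\mathbf{Q}_{1} - \boldsymbol{\mu})^{*}\bigr],
\end{equation*}
so that the bracket $1 + \rho\,\tr(\cdots)$ of (\ref{mmt}) becomes $1 + \tr \mathbf{\Delta}(\mathbf{Q}_{1} - \boldsymbol{\mu})\mathbf{\Lambda}(\mathbf{Q}_{1} - \boldsymbol{\mu})^{*}$ when $\rho = 1$. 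Multiplying the transformed kernel by the Jacobian and carrying along the normalising constant of (\ref{mmt}) then produces the stated density, whence $\mathbf{Q}_{1} \sim \mathcal{MT}_{m \times n}^{\beta}(\nu, \boldsymbol{\mu}, \mathbf{\Delta}, \mathbf{\Lambda})$.

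I expect the only delicate point to be the bookkeeping in the Jacobian step: correctly pairing the exponents $\beta n/2$ and $\beta m/2$ with $\mathbf{\Delta}$ and $\mathbf{\Lambda}$ respectively, confirming $|\mathbf{N}^{*}\mathbf{N}| = |\mathbf{\Lambda}|$, and ensuring that the cyclic trace identity and the Jacobian (\ref{lt}) hold uniformly across the real, complex, quaternion and octonion cases (the last of these resting on the conjectural framework recorded in the Appendix). Everything else is the same routine computation used in the proof of Theorem \ref{teo4}.
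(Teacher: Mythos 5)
Your proof is correct and is essentially the paper's own argument: the published proof consists of exactly your Jacobian step, $(d\mathbf{T}_{1}) = |\mathbf{M}\mathbf{M}^{*}|^{\beta n/2}|\mathbf{N}\mathbf{N}^{*}|^{\beta m/2}(d\mathbf{Q}_{1}) = |\mathbf{\Delta}|^{\beta n/2}|\mathbf{\Lambda}|^{\beta m/2}(d\mathbf{Q}_{1})$ via Lemma \ref{lemlt}, with the substitution into the kernel of (\ref{mmt}) left implicit. Note that your computation correctly produces $\left[1+\tr \mathbf{\Delta}(\mathbf{Q}_{1}-\boldsymbol{\mu})\mathbf{\Lambda}(\mathbf{Q}_{1}-\boldsymbol{\mu})^{*}\right]$ and starts from $\mathbf{T}_{1}$ centred at $\mathbf{0}$, thereby silently repairing two typographical slips in the corollary as printed (the minus sign in the bracket, and the location parameter $\boldsymbol{\mu}$ assigned to $\mathbf{T}_{1}$, which would otherwise shift the location of $\mathbf{Q}_{1}$ away from $\boldsymbol{\mu}$).
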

\begin{proof}
The proof follows observing that, by (\ref{lt})
$$
  (d\mathbf{T}_{1}) = |\mathbf{M}\mathbf{M}^{*}|^{\beta n/2} |\mathbf{N}\mathbf{N}^{*}|^{\beta
    m/2}(d\mathbf{Q}_{1}) = |\mathbf{\Delta}|^{\beta n/2} |\mathbf{\Lambda}|^{\beta
    m/2}(d\mathbf{Q}_{1}). \qquad \mbox{\qed}
$$
\end{proof}
Now, assuming that $\mathbf{T}_{1} \sim \mathcal{MT}_{m \times n}^{\beta}(\nu, \boldsymbol{0},
\mathbf{I}_{m},\mathbf{I}_{n}),$ with $n \geq m$ and defining $\mathbf{F}_{1} =
\mathbf{T}_{1}\mathbf{T}_{1}^{*} \in \mathfrak{P}_{m}^{\beta}$, then, under the conditions of
Theorem \ref{teo4} we have that
$$
  \mathbf{F}_{1} = S^{-1}\mathbf{YY}^{*} = S^{-1}\mathbf{W}
$$
where $\mathbf{W}=\mathbf{YY}^{*} \sim \mathcal{W}_{m}^{\beta}(n, \mathbf{I}_{m})$, $n
> \beta(m-1)$.
\begin{thm}\label{teo5}
The density of $\mathbf{F}_{1}$ is
\begin{equation}\label{MFII1}
    \frac{\Gamma^{\beta}_{1}[\beta(\nu+mn)/2]}{\Gamma^{\beta}_{1}[\beta \nu/2] \Gamma_{m}^{\beta}[\beta n/2]}
    |\mathbf{F}_{1}|^{\beta(n-m+1)/2-1}(1+\tr \mathbf{F}_{1})^{-\beta(\nu+mn)/2},
\end{equation}
$\mathbf{F}_{1}$ is said to have a \emph{matrix multivariate beta type II distribution}.
\end{thm}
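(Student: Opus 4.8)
The plan is to exploit the representation $\mathbf{F}_{1} = S^{-1}\mathbf{W}$ already established above, in which $S \sim \mathbf{\Gamma}^{\beta}(\nu,1)$ and $\mathbf{W} = \mathbf{YY}^{*} \sim \mathcal{W}_{m}^{\beta}(n,\mathbf{I}_{m})$ are independent, and to obtain the law of $\mathbf{F}_{1}$ by a change of variable followed by integration over the scalar $S$. First I would write the joint density of $(S,\mathbf{W})$ as the product of the gamma density of $S$ and the Wishart density of $\mathbf{W}$ supplied by (\ref{w}); by independence this is proportional to $s^{\beta\nu/2-1}|\mathbf{W}|^{\beta(n-m+1)/2-1}\exp\{-\beta(s+\tr\mathbf{W})/2\}$, with normalising constant equal to the product of the two individual constants, namely $[(2\beta^{-1})^{\beta(\nu+mn)/2}\Gamma_{1}^{\beta}[\beta\nu/2]\Gamma_{m}^{\beta}[\beta n/2]]^{-1}$.

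Next I would perform the change of variable $(S,\mathbf{W}) \mapsto (S,\mathbf{F}_{1})$ given by $\mathbf{W} = S\mathbf{F}_{1}$ at fixed $S$. Since every one of the $\dim \mathfrak{P}_{m}^{\beta} = m + \beta m(m-1)/2$ real coordinates of $\mathbf{W}$ is multiplied by the scalar $S$, the Jacobian is $(d\mathbf{W}) = S^{m+\beta m(m-1)/2}(d\mathbf{F}_{1})$, while $|\mathbf{W}| = S^{m}|\mathbf{F}_{1}|$ and $\tr\mathbf{W} = S\tr\mathbf{F}_{1}$. The decisive simplification is that the powers of $S$ telescope: the contributions $\beta\nu/2-1$ from the gamma kernel, $m[\beta(n-m+1)/2-1]$ from $|\mathbf{W}|^{\beta(n-m+1)/2-1}$, and $m+\beta m(m-1)/2$ from the Jacobian combine to give exactly $\beta(\nu+mn)/2-1$, since the bracketed $\beta$-terms collapse to $(\beta m/2)[(n-m+1)+(m-1)] = \beta mn/2$. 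Hence the joint density of $(S,\mathbf{F}_{1})$ is proportional to $s^{\beta(\nu+mn)/2-1}|\mathbf{F}_{1}|^{\beta(n-m+1)/2-1}\exp\{-\beta s(1+\tr\mathbf{F}_{1})/2\}$.

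Finally I would integrate out $S \in (0,\infty)$ using the elementary gamma integral $\int_{0}^{\infty} s^{a-1}\exp\{-\beta s b/2\}\,ds = (2\beta^{-1})^{a}\,\Gamma_{1}^{\beta}[a]\,b^{-a}$ with $a = \beta(\nu+mn)/2$ and $b = 1+\tr\mathbf{F}_{1}$. This produces the factor $(1+\tr\mathbf{F}_{1})^{-\beta(\nu+mn)/2}$ together with an extra $(2\beta^{-1})^{\beta(\nu+mn)/2}\Gamma_{1}^{\beta}[\beta(\nu+mn)/2]$, and the power of $2\beta^{-1}$ cancels the one already in the denominator, leaving precisely the constant $\Gamma_{1}^{\beta}[\beta(\nu+mn)/2]/(\Gamma_{1}^{\beta}[\beta\nu/2]\Gamma_{m}^{\beta}[\beta n/2])$ of (\ref{MFII1}). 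The same density can be reached, in parallel with Theorem \ref{teo2}, by starting from the $T$ density (\ref{mmt}) and integrating over the Stiefel manifold via (\ref{vol}) and (\ref{w}); I would prefer the route above because the representation $\mathbf{F}_{1}=S^{-1}\mathbf{W}$ is already in hand. The one point demanding care, and the only genuine obstacle, is the bookkeeping: getting the real dimension $\dim\mathfrak{P}_{m}^{\beta}=m+\beta m(m-1)/2$ right in the Jacobian so that the $S$-exponent lands exactly on $\beta(\nu+mn)/2-1$, and tracking the $(2\beta^{-1})$ powers so that they cancel cleanly rather than leaving a stray factor.
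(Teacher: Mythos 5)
Your proof is correct, but it takes a genuinely different route from the paper's. The paper proves Theorem \ref{teo5} exactly as Theorem \ref{teo2}: starting from the matrix multivariate $T$ density (\ref{mmt}) with $\boldsymbol{\mu}=\mathbf{0}$, $\mathbf{\Sigma}=\mathbf{I}_{n}$, it substitutes $\mathbf{T}_{1}\mathbf{T}_{1}^{*}=\mathbf{F}_{1}$ via the polar-type Jacobian (\ref{w}), $(d\mathbf{T}_{1})=2^{-m}|\mathbf{F}_{1}|^{\beta(n-m+1)/2-1}(d\mathbf{F}_{1})(\mathbf{V}_{1}d\mathbf{V}_{1}^{*})$, and integrates over the Stiefel manifold using (\ref{vol}); the factor $2^{-m}\cdot 2^{m}\pi^{\beta mn/2}/\Gamma_{m}^{\beta}[\beta n/2]$ then converts the constant of (\ref{mmt}) into that of (\ref{MFII1}). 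You instead bypass the $T$ density altogether and work from the representation $\mathbf{F}_{1}=S^{-1}\mathbf{W}$, mimicking the structure of the proof of Theorem \ref{teo1}: joint density of $(S,\mathbf{W})$, scaling change of variable, integration over the scalar. Your bookkeeping checks out: the exponent of $s$ is $\beta\nu/2-1+m[\beta(n-m+1)/2-1]+m+\beta m(m-1)/2=\beta(\nu+mn)/2-1$, and the gamma integral cancels the $(2\beta^{-1})^{\beta(\nu+mn)/2}$ in the normalising constant, leaving exactly (\ref{MFII1}). Two cosmetic remarks. First, the Wishart density of $\mathbf{W}$ is not ``supplied by (\ref{w})''; equation (\ref{w}) is only the Jacobian lemma, and the Wishart density is the one quoted from \citet{k:84} and \citet{jdggj:10b} in the proof of Theorem \ref{teo1} (or is derived from the normal density of $\mathbf{Y}$ by combining (\ref{w}) with (\ref{vol})). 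Second, you need not count the real dimension of $\mathfrak{P}_{m}^{\beta}$ by hand: your scaling Jacobian is precisely Lemma \ref{lemhlt} with $\mathbf{A}=s^{1/2}\mathbf{I}_{m}$, giving $(d\mathbf{W})=s^{m[\beta(m-1)/2+1]}(d\mathbf{F}_{1})$, which equals your $s^{m+\beta m(m-1)/2}$. You also implicitly set $\rho=1$, which is consistent with the standardised $\mathcal{MT}_{m\times n}^{\beta}(\nu,\mathbf{0},\mathbf{I}_{m},\mathbf{I}_{n})$ assumed before the theorem. What the paper's route buys is brevity, since (\ref{mmt}) is already proved and the Stiefel integration is a one-line appeal to (\ref{vol}) and (\ref{w}); what your route buys is independence from Theorem \ref{teo4} — you obtain the beta type II density directly from the defining construction, at the cost of redoing the gamma--Wishart marginalisation.
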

\begin{proof}
The proof follows from (\ref{mmt}) by applying (\ref{vol}) and (\ref{w}). \qed
\end{proof}
Similarly, if $n<m$ and $\widetilde{\mathbf{F}}_{1} = \mathbf{T}_{1}^{*}\mathbf{T}_{1} \in
\mathfrak{P}_{n}^{\beta}$.
\begin{thm}\label{teo6}
$\widetilde{\mathbf{F}}_{1}$ has the density
\begin{equation}\label{MFII2}
    \frac{\Gamma^{\beta}_{1}[\beta(\nu+mn)/2]}{\Gamma^{\beta}_{1}[\beta \nu/2] \Gamma_{n}^{\beta}[\beta m/2]}
    |\widetilde{\mathbf{F}}_{1}|^{\beta(m-n+1)/2-1}(1+\tr \widetilde{\mathbf{F}}_{1})^{-\beta(\nu+mn)/2}.
\end{equation}
Thus, $\widetilde{\mathbf{F}}_{1}$ is said to have a \emph{matrix multivariate distribution
type II distribution}.
\end{thm}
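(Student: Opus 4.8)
The plan is to mirror exactly the argument used for Theorem \ref{teo5}, since $\widetilde{\mathbf{F}}_{1} = \mathbf{T}_{1}^{*}\mathbf{T}_{1}$ plays the same structural role when $n<m$ that $\mathbf{F}_{1} = \mathbf{T}_{1}\mathbf{T}_{1}^{*}$ played when $n\geq m$. First I would record the analogue of the decomposition preceding Theorem \ref{teo5}: under the conditions of Theorem \ref{teo4}, writing $\mathbf{T}_{1} = S^{-1/2}\mathbf{Y}$ gives
$$
  \widetilde{\mathbf{F}}_{1} = \mathbf{T}_{1}^{*}\mathbf{T}_{1} = S^{-1}\mathbf{Y}^{*}\mathbf{Y} = S^{-1}\widetilde{\mathbf{W}},
$$
where $\widetilde{\mathbf{W}} = \mathbf{Y}^{*}\mathbf{Y} \sim \mathcal{W}_{n}^{\beta}(m,\mathbf{I}_{n})$ with $m > \beta(n-1)$. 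The roles of $m$ and $n$ are thus interchanged, and the Wishart degrees of freedom become $m$ instead of $n$, acting on an $n\times n$ positive matrix rather than an $m\times m$ one.

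Next I would start from the matrix multivariate $T$ density (\ref{mmt}) specialised to $\boldsymbol{\mu}=\mathbf{0}$, $\mathbf{\Sigma}=\mathbf{I}_{n}$, and push it forward under the map $\mathbf{T}_{1}\mapsto\widetilde{\mathbf{F}}_{1}=\mathbf{T}_{1}^{*}\mathbf{T}_{1}$. The two ingredients are the surface-integral/volume identity (\ref{vol}) and the Wishart normalisation (\ref{w}), applied precisely as in the proof of Theorem \ref{teo5}; the only change is that the matrix being formed is $n\times n$, so (\ref{vol}) contributes the Jacobian exponent $\beta(m-n+1)/2-1$ on $|\widetilde{\mathbf{F}}_{1}|$ and the trace term becomes $\tr\widetilde{\mathbf{F}}_{1}$. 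This converts the $\Gamma_{m}^{\beta}[\beta n/2]$ appearing in (\ref{MFII1}) into $\Gamma_{n}^{\beta}[\beta m/2]$, yielding the stated constant and the factor $|\widetilde{\mathbf{F}}_{1}|^{\beta(m-n+1)/2-1}(1+\tr\widetilde{\mathbf{F}}_{1})^{-\beta(\nu+mn)/2}$.

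Alternatively, and perhaps more cleanly, I would invoke the substitution principle already used in Theorem \ref{teo3}: density (\ref{MFII2}) should follow from density (\ref{MFII1}) by the formal replacements $m\rightarrow n$, $n\rightarrow m$. Applying these to (\ref{MFII1}) sends $\Gamma_{m}^{\beta}[\beta n/2]\rightarrow\Gamma_{n}^{\beta}[\beta m/2]$, sends the exponent $\beta(n-m+1)/2-1\rightarrow\beta(m-n+1)/2-1$, and leaves $\beta(\nu+mn)/2$ and $\Gamma^{\beta}_{1}[\beta\nu/2]$ invariant (since $mn$ and $\nu$ are symmetric under the swap), reproducing (\ref{MFII2}) immediately. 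I would note, however, that the matricvariate case in Theorem \ref{teo3} also required $\nu\rightarrow\nu+n-m$, whereas here no shift of $\nu$ appears; the reason is that in the multivariate setting $S$ is a scalar gamma variate whose parameter $\nu$ does not couple to the matrix dimensions in the way the Wishart degrees of freedom did.

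The main obstacle I anticipate is bookkeeping rather than substance: I must verify that the degrees-of-freedom mismatch between the two models is handled correctly. Concretely, in the $n\geq m$ case $\widetilde{\mathbf{W}}=\mathbf{Y}^{*}\mathbf{Y}$ would be singular, which is exactly why one forms $\mathbf{F}_{1}=\mathbf{T}_{1}\mathbf{T}_{1}^{*}$ there and switches to $\widetilde{\mathbf{F}}_{1}$ only when $n<m$; I would confirm that the condition $m>\beta(n-1)$ is precisely what guarantees $\widetilde{\mathbf{W}}\in\mathfrak{P}_{n}^{\beta}$ so that (\ref{vol}) and (\ref{w}) apply without degeneracy. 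Checking that the symmetric-substitution shortcut and the direct pushforward give the same normalising constant provides a useful internal consistency check.
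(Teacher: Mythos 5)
Your proposal is correct and takes essentially the same route as the paper: the paper likewise proves Theorem \ref{teo6} by repeating the argument of Theorem \ref{teo5} (pushing the density (\ref{mmt}) forward through (\ref{w}) and (\ref{vol}) with the roles of $m$ and $n$ interchanged, via $\widetilde{\mathbf{F}}_{1}=S^{-1}\mathbf{Y}^{*}\mathbf{Y}$ with $\mathbf{Y}^{*}\mathbf{Y}\sim\mathcal{W}_{n}^{\beta}(m,\mathbf{I}_{n})$), and, alternatively, by exactly the substitutions $m\rightarrow n$, $n\rightarrow m$ recorded in (\ref{s2}). Your added observation that no shift $\nu\rightarrow\nu+n-m$ is required here---since the scalar gamma variate $S$ decouples $\nu$ from the matrix dimensions, leaving the exponent $\beta(\nu+mn)/2$ symmetric under the swap---is sound and consistent with the paper (your only slip is attributing the Jacobian exponent $\beta(m-n+1)/2-1$ to (\ref{vol}) rather than to (\ref{w}), which is immaterial).
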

\begin{proof}
The proof is the same as that given in Theorem \ref{teo5}. Alternatively, the density
(\ref{MFII2}) can be obtained from density (\ref{MFII1}) by making the following substitutions,
\begin{equation}\label{s2}
    m \rightarrow n, \quad n \rightarrow m. \qquad \qquad \mbox{\qed}
\end{equation}
\end{proof}
As in the matricvariate beta type II distributions, assume that $\mathbf{A} \in \mathcal{L}_{m
\times m}^{\beta}$ is any square root of constant matrix $\mathbf{\Pi} = \mathbf{AA}^{*} \in
\mathfrak{P}_{m}^{\beta}$. Also, define $\mathbf{Z} = \mathbf{A}^{*}\mathbf{F}\mathbf{A}$,
therefore:
\begin{cor} The density of $\mathbf{Z}$ is
$$
  \frac{\Gamma^{\beta}_{1}[\beta(\nu+mn)/2]|\mathbf{\Pi}|^{\beta n/2}}{\Gamma^{\beta}_{1}[\beta \nu/2]
  \Gamma_{n}^{\beta}[\beta m/2]}|\mathbf{Z}|^{\beta(n-m+1)/2-1}
    (1+\mathbf{\Pi Z})^{-\beta(n+\nu)/2}.
$$
$\mathbf{Z}$ is said to have a \emph{nonstandardised matrix multivariate beta type II
distribution}.
\end{cor}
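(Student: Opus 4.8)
The plan is to mimic the derivation of the nonstandardised matricvariate beta type II density given earlier, but with its starting point replaced by the matrix multivariate beta type II density. Concretely, I would start from the density (\ref{MFII1}) of $\mathbf{F}_{1} = \mathbf{T}_{1}\mathbf{T}_{1}^{*}$ obtained in Theorem \ref{teo5}, and regard $\mathbf{Z} = \mathbf{A}^{*}\mathbf{F}_{1}\mathbf{A}$ (the matrix written $\mathbf{F}$ in the statement being the present $\mathbf{F}_{1}$) as a change of variable on $\mathfrak{P}_{m}^{\beta}$. Since $\mathbf{A} \in \mathcal{L}_{m \times m}^{\beta}$ is a square root of the positive definite $\mathbf{\Pi} = \mathbf{A}\mathbf{A}^{*}$, it is invertible, so this map is a bijection of $\mathfrak{P}_{m}^{\beta}$ onto itself with inverse $\mathbf{F}_{1} = (\mathbf{A}^{*})^{-1}\mathbf{Z}\mathbf{A}^{-1}$; the density of $\mathbf{Z}$ then follows from the standard transformation-of-variables formula.

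Next I would assemble the three ingredients. First, the Jacobian of the congruence $\mathbf{Z} = \mathbf{A}^{*}\mathbf{F}_{1}\mathbf{A}$ is supplied by (\ref{hlt}), namely $(d\mathbf{F}_{1}) = |\mathbf{\Pi}|^{-[\beta(m-1)/2+1]}(d\mathbf{Z})$, exactly the factor already used in the matricvariate corollary. Second, the determinant factor transforms by $|\mathbf{F}_{1}| = |(\mathbf{A}^{*})^{-1}\mathbf{Z}\mathbf{A}^{-1}| = |\mathbf{\Pi}|^{-1}|\mathbf{Z}|$, so $|\mathbf{F}_{1}|^{\beta(n-m+1)/2-1}$ produces the power of $|\mathbf{Z}|$ in the statement together with a power of $|\mathbf{\Pi}|$. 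Third---and this is where the matrix multivariate case genuinely departs from the matricvariate one---the determinant kernel is replaced by the trace kernel, and $\tr \mathbf{F}_{1} = \tr\big((\mathbf{A}^{*})^{-1}\mathbf{Z}\mathbf{A}^{-1}\big) = \tr\big(\mathbf{\Pi}^{-1}\mathbf{Z}\big)$ by cyclic invariance of the trace, so $(1+\tr\mathbf{F}_{1})^{-\beta(\nu+mn)/2}$ passes to $(1+\tr\mathbf{\Pi}^{-1}\mathbf{Z})^{-\beta(\nu+mn)/2}$ and contributes \emph{no} further factor of $|\mathbf{\Pi}|$. Substituting these into (\ref{MFII1}) and absorbing the accumulated powers of $|\mathbf{\Pi}|$ into the normalising constant yields the claimed density, the constant otherwise being inherited unchanged from (\ref{MFII1}).

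The routine part is purely bookkeeping; the step needing the most care---and which I expect to be the main obstacle---is tracking the net power of $|\mathbf{\Pi}|$. It is collected from three sources (the determinant kernel, the Jacobian (\ref{hlt}), and the trace kernel, the last contributing nothing), and here the behaviour differs from the matricvariate corollary, where the $|\mathbf{I}_{m}+\mathbf{F}|$ kernel itself contributed an additional power of the dispersion matrix. One must also be careful, over a general real normed division algebra, that the identities $\tr((\mathbf{A}^{*})^{-1}\mathbf{Z}\mathbf{A}^{-1}) = \tr(\mathbf{\Pi}^{-1}\mathbf{Z})$ and $|(\mathbf{A}^{*})^{-1}\mathbf{Z}\mathbf{A}^{-1}| = |\mathbf{\Pi}|^{-1}|\mathbf{Z}|$ be applied with the involution $*$ rather than ordinary transposition, since $\mathbf{A}^{*}\mathbf{A}$ and $\mathbf{A}\mathbf{A}^{*}$ need not coincide although they share the same determinant; this is precisely the point at which the exponent on $|\mathbf{\Pi}|$ in the final display is settled. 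Once that exponent is fixed, the surviving factors $|\mathbf{Z}|^{\beta(n-m+1)/2-1}$ and $(1+\tr\mathbf{\Pi}^{-1}\mathbf{Z})^{-\beta(\nu+mn)/2}$ reproduce the statement directly.
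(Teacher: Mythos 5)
Your route coincides with the paper's own proof, which in its entirety reads ``The proof follows from (\ref{MFII1}) by applying (\ref{hlt})'', and your three ingredients (the Jacobian (\ref{hlt}), the determinant factor, the trace kernel) are the right ones. The genuine gap is that the one step you yourself single out as the main obstacle --- tracking the net power of $|\mathbf{\Pi}|$ --- is never actually performed, and performing it contradicts your conclusion that the surviving factors ``reproduce the statement directly''. From (\ref{MFII1}), the determinant factor contributes $|\mathbf{\Pi}|^{-[\beta(n-m+1)/2-1]}$, the Jacobian contributes $|\mathbf{\Pi}|^{-[\beta(m-1)/2+1]}$, and, as you correctly observe, the trace kernel contributes nothing; the exponents add to $-\beta n/2$, so the transformation $\mathbf{Z}=\mathbf{A}^{*}\mathbf{F}_{1}\mathbf{A}$ yields
$$
\frac{\Gamma^{\beta}_{1}[\beta(\nu+mn)/2]}{\Gamma^{\beta}_{1}[\beta\nu/2]\,\Gamma_{m}^{\beta}[\beta n/2]\,|\mathbf{\Pi}|^{\beta n/2}}\,
|\mathbf{Z}|^{\beta(n-m+1)/2-1}\left(1+\tr\mathbf{\Pi}^{-1}\mathbf{Z}\right)^{-\beta(\nu+mn)/2},
$$
with $|\mathbf{\Pi}|^{\beta n/2}$ in the \emph{denominator}, whereas the corollary displays it in the numerator. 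Indeed, a density with kernel $(1+\tr\mathbf{\Pi}^{-1}\mathbf{Z})^{-\beta(\nu+mn)/2}$ must carry $|\mathbf{\Pi}|^{-\beta n/2}$ in order to integrate to one (substitute $\mathbf{Z}=\mathbf{A}\mathbf{W}\mathbf{A}^{*}$); the printed constant $|\mathbf{\Pi}|^{+\beta n/2}$ instead pairs with the kernel $(1+\tr\mathbf{\Pi}\mathbf{Z})^{-\beta(\nu+mn)/2}$, i.e.\ with the inverse transformation $\mathbf{Z}=\mathbf{A}^{-1}\mathbf{F}_{1}(\mathbf{A}^{*})^{-1}$. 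The printed corollary is itself typo-ridden (the trace operator is missing from $(1+\mathbf{\Pi Z})$, the exponent should be $-\beta(\nu+mn)/2$ rather than $-\beta(n+\nu)/2$, and $\Gamma_{n}^{\beta}[\beta m/2]$ should be $\Gamma_{m}^{\beta}[\beta n/2]$, cf.\ (\ref{MFII1})), so a correct write-up must either derive the $-\beta n/2$ exponent and flag the misprint, or change the transformation to match the displayed density; asserting agreement without the computation is exactly where your proposal fails.

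A secondary unresolved point: you raise the $\mathbf{A}^{*}\mathbf{A}$ versus $\mathbf{A}\mathbf{A}^{*}$ issue but then still write $\tr(\mathbf{\Pi}^{-1}\mathbf{Z})$. Cyclic invariance gives $\tr\mathbf{F}_{1}=\tr\bigl[(\mathbf{A}^{*}\mathbf{A})^{-1}\mathbf{Z}\bigr]$, and $(\mathbf{A}^{*}\mathbf{A})^{-1}=(\mathbf{A}\mathbf{A}^{*})^{-1}=\mathbf{\Pi}^{-1}$ only when $\mathbf{A}$ is normal, e.g.\ the Hermitian square root; ``any square root'' does not suffice. The repair for arbitrary $\mathbf{A}$ is the polar decomposition $\mathbf{A}=\mathbf{H}\mathbf{P}$ with $\mathbf{H}\in\mathfrak{U}^{\beta}(m)$ and $\mathbf{P}\in\mathfrak{P}_{m}^{\beta}$: since the density (\ref{MFII1}) depends on $\mathbf{F}_{1}$ only through $|\mathbf{F}_{1}|$ and $\tr\mathbf{F}_{1}$, and $(d\mathbf{F}_{1})$ is invariant under unitary congruence by (\ref{hlt}), $\mathbf{H}^{*}\mathbf{F}_{1}\mathbf{H}$ is distributed as $\mathbf{F}_{1}$ and $\mathbf{Z}=\mathbf{P}(\mathbf{H}^{*}\mathbf{F}_{1}\mathbf{H})\mathbf{P}$ reduces to the Hermitian case with dispersion matrix $\mathbf{P}^{2}=\mathbf{A}^{*}\mathbf{A}$. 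Because $|\mathbf{A}^{*}\mathbf{A}|=|\mathbf{\Pi}|$, the normalising constant is unaffected, but the matrix appearing in the kernel is $(\mathbf{A}^{*}\mathbf{A})^{-1}$, so one should either take $\mathbf{A}$ Hermitian or state the result with $\mathbf{\Pi}=\mathbf{A}^{*}\mathbf{A}$ (and for $\beta=4$ read the cyclic identity for the real part of the trace). The same caveat applies, silently, to the paper's matricvariate corollary, where $|\mathbf{\Delta}+\mathbf{Z}|$ is strictly $|\mathbf{M}^{*}\mathbf{M}+\mathbf{Z}|$.
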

\begin{proof}
The proof follows from (\ref{MFII1}) by applying (\ref{hlt}). \qed
\end{proof}
In the real and singular case, the matricvariate and matrix multivariate $T$ distributions have
been studied by \citet{jdggj:09c}.

\section{Singular value densities}\label{sec4}
In this section, the joint densities of the singular values of matrices $\mathbf{T}$,
$\widetilde{\mathbf{T}}$, $\mathbf{T}_{1}$ and $\widetilde{\mathbf{T}}_{1}$ are derived. In
addition, and as a direct consequence, the joint densities of the eigenvalues of $\mathbf{F}$,
$\widetilde{\mathbf{F}}$, $\mathbf{F}_{1}$ and $\widetilde{\mathbf{F}}_{1}$ are obtained for
real normed division algebras.

\begin{thm}\label{teosv}
Let $\delta_{1}, \dots, \delta_{m}$ be the singular values of $\mathbf{T} \sim \mathcal{T}_{m
\times n}^{\beta}(\nu,\mathbf{0}, \mathbf{I}_{m}, \mathbf{I}_{n})$, $\delta_{1}> \cdots >
\delta_{m} > 0$. Then its joint density is
\begin{equation}\label{svT}
    \frac{2^{m} \ \pi^{\beta m^{2}+\tau}}{\Gamma_{m}^{\beta}[\beta m/2] \mathcal{B}_{m}^{\beta}[\beta \nu/2, \beta n/2]}
    \prod_{i=1}^{m} \delta_{i}^{\beta(n-m+1)-1}(1+\delta_{i}^{2})^{-\beta(\nu+n)/2}
    \prod_{i<j}^{m}(\delta_{i}^{2} - \delta_{j}^{2})^{\beta}
\end{equation}
where $\tau$ is defined in Lemma \ref{lemsvd}.
\end{thm}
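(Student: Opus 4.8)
The plan is to start from the matricvariate beta type II density for $\mathbf{F} = \mathbf{TT}^{*} \in \mathfrak{P}_{m}^{\beta}$ given in (\ref{FII1}), and transform it into the joint density of the singular values of $\mathbf{T}$. The key observation is that if $\delta_{1} > \cdots > \delta_{m} > 0$ are the singular values of $\mathbf{T}$, then $\delta_{1}^{2} > \cdots > \delta_{m}^{2} > 0$ are precisely the eigenvalues of $\mathbf{F} = \mathbf{TT}^{*}$. So the proof splits naturally into two transfers of variables: first from the density of $\mathbf{F}$ on $\mathfrak{P}_{m}^{\beta}$ to the joint density of its eigenvalues $\lambda_{1} > \cdots > \lambda_{m}$, and second from the eigenvalues $\lambda_{i}$ to the singular values via $\lambda_{i} = \delta_{i}^{2}$.

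First I would invoke the spectral-decomposition Jacobian for real normed division algebras (this is exactly the content of Lemma \ref{lemsvd}, which supplies the constant $\tau$ and the Vandermonde-type factor). Writing $\mathbf{F} = \mathbf{H}\,\mathbf{D}_{\lambda}\,\mathbf{H}^{*}$ with $\mathbf{D}_{\lambda} = \diag(\lambda_{1}, \dots, \lambda_{m})$ and $\mathbf{H}$ ranging over the appropriate unitary/orthogonal group, the volume element factorizes as $(d\mathbf{F}) = c_{\beta}\prod_{i<j}(\lambda_{i} - \lambda_{j})^{\beta}\,\bigl(\bigwedge_{i} d\lambda_{i}\bigr)(\mathbf{H}^{*}d\mathbf{H})$, where the integral of the Haar-type measure $(\mathbf{H}^{*}d\mathbf{H})$ over the group contributes a factor proportional to $\pi^{\tau}/\Gamma_{m}^{\beta}[\beta m/2]$. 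Because the kernel in (\ref{FII1}) depends on $\mathbf{F}$ only through $|\mathbf{F}| = \prod_{i}\lambda_{i}$ and $|\mathbf{I}_{m} + \mathbf{F}| = \prod_{i}(1 + \lambda_{i})$, the integral over $\mathbf{H}$ decouples cleanly, giving the joint eigenvalue density of $\mathbf{F}$ as a constant times $\prod_{i}\lambda_{i}^{\beta(n-m+1)/2 - 1}(1+\lambda_{i})^{-\beta(n+\nu)/2}\prod_{i<j}(\lambda_{i}-\lambda_{j})^{\beta}$.

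Next I would apply the one-to-one change of variables $\lambda_{i} = \delta_{i}^{2}$, whose scalar Jacobian is $d\lambda_{i} = 2\delta_{i}\,d\delta_{i}$, producing the factor $2^{m}\prod_{i}\delta_{i}$. Under this substitution $\lambda_{i}^{\beta(n-m+1)/2 - 1}$ becomes $\delta_{i}^{\beta(n-m+1) - 2}$, which combines with the Jacobian $\delta_{i}$ to give $\delta_{i}^{\beta(n-m+1) - 1}$; the term $(1+\lambda_{i})^{-\beta(n+\nu)/2}$ becomes $(1+\delta_{i}^{2})^{-\beta(\nu+n)/2}$; and the Vandermonde factor $\prod_{i<j}(\lambda_{i}-\lambda_{j})^{\beta}$ becomes $\prod_{i<j}(\delta_{i}^{2}-\delta_{j}^{2})^{\beta}$. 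Collecting the constants from (\ref{FII1}) and from Lemma \ref{lemsvd} then yields exactly (\ref{svT}).

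I expect the main obstacle to be the careful bookkeeping of the normalizing constants, specifically verifying that the group-volume contribution combines with $1/\mathcal{B}_{m}^{\beta}[\beta\nu/2,\beta n/2]$ to produce the stated prefactor $2^{m}\pi^{\beta m^{2}+\tau}/(\Gamma_{m}^{\beta}[\beta m/2]\,\mathcal{B}_{m}^{\beta}[\beta\nu/2,\beta n/2])$, with the power of $\pi$ matching the definition of $\tau$ across all four division algebras simultaneously. The algebraic transfer of the kernel is routine once $\lambda_{i}=\delta_{i}^{2}$ is substituted; the delicate point is ensuring the spectral Jacobian constant in Lemma \ref{lemsvd} is stated in the normalization that makes the $\pi^{\beta m^{2}+\tau}$ and $\Gamma_{m}^{\beta}[\beta m/2]$ factors appear precisely as written.
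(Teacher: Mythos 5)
Your plan is workable, but it is not the paper's proof, and as written it has one genuine gap. The paper proceeds in a single step directly from the density (\ref{T}) of $\mathbf{T}$ (with $\boldsymbol{\mu}=\mathbf{0}$, $\mathbf{\Xi}=\mathbf{I}_{m}$, $\mathbf{\Sigma}=\mathbf{I}_{n}$): it substitutes the singular value decomposition $\mathbf{T}=\mathbf{V}_{1}\mathbf{D}\mathbf{W}^{*}$ using (\ref{svd}), notes that $|\mathbf{I}_{m}+\mathbf{TT}^{*}|=\prod_{i}(1+\delta_{i}^{2})$ depends only on $\mathbf{D}$, and integrates out $\mathbf{V}_{1}\in\mathcal{V}_{m,n}^{\beta}$ and $\mathbf{W}\in\mathfrak{U}^{\beta}(m)$ via two applications of (\ref{vol}). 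You instead route through $\mathbf{F}=\mathbf{TT}^{*}$, i.e., through Theorem \ref{teo2} and density (\ref{FII1}), then through the eigendecomposition of the Hermitian matrix $\mathbf{F}$, then back to $\delta_{i}$ via $\lambda_{i}=\delta_{i}^{2}$. That is a legitimate alternative: it buys you the eigenvalue density of the beta type II matrix as an intermediate product (which the paper only writes down afterwards, as a consequence of (\ref{svT})), at the price of needing a Jacobian the paper never states.

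That is the gap: you assert that the spectral-decomposition Jacobian for $\mathbf{F}=\mathbf{H}\mathbf{D}_{\lambda}\mathbf{H}^{*}$, $\mathbf{F}\in\mathfrak{P}_{m}^{\beta}$, ``is exactly the content of Lemma \ref{lemsvd}.'' It is not. Lemma \ref{lemsvd} is the SVD of a \emph{rectangular} matrix $\mathbf{X}\in\mathcal{L}_{m,n}^{\beta}$; no eigendecomposition Jacobian for Hermitian matrices appears in the appendix, and its normalising constant across the four algebras --- the factor you leave as ``proportional to'' --- is precisely the delicate point you yourself flag but do not resolve. It can be supplied from the paper's own lemmas: writing $(d\mathbf{X})$ once via (\ref{w}) with $\mathbf{S}=\mathbf{XX}^{*}=\mathbf{W}\mathbf{\Lambda}\mathbf{W}^{*}$ and once via (\ref{svd}), and cancelling the common factors (using $\lambda_{i}=d_{i}^{2}$, so $(d\mathbf{\Lambda})=2^{m}\prod_{i}d_{i}\,(d\mathbf{D})$), forces
\begin{equation*}
  (d\mathbf{S}) \;=\; 2^{-m}\pi^{\tau}\prod_{i<j}^{m}(\lambda_{i}-\lambda_{j})^{\beta}\,
  (d\mathbf{\Lambda})(\mathbf{W}d\mathbf{W}^{*}),
\end{equation*}
and with this normalisation your two-step argument closes. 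One further caution on the constants: carried through carefully, \emph{both} your route and the paper's give the prefactor $2^{m}\pi^{\beta m^{2}/2+\tau}\big/\left(\Gamma_{m}^{\beta}[\beta m/2]\,\mathcal{B}_{m}^{\beta}[\beta\nu/2,\beta n/2]\right)$, i.e., exponent $\beta m^{2}/2+\tau$ on $\pi$ (the $2$-powers are $2^{-m}\cdot 2^{m}\cdot 2^{m}$ and the $\pi^{\beta mn/2}$ from (\ref{T}) cancels against $\Vol(\mathcal{V}_{m,n}^{\beta})$). This agrees with the structure of (\ref{svMT}), whose proof the paper declares identical, but not with the exponent $\beta m^{2}+\tau$ printed in (\ref{svT}), which appears to be a typographical slip. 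So your closing claim that the bookkeeping ``yields exactly (\ref{svT})'' cannot hold literally as displayed; you should verify the power of $\pi$ yourself rather than calibrating your Jacobian constant to reproduce the printed formula.
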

\begin{proof}
This follows immediately from (\ref{T}), first using (\ref{svd}) and then applying (\ref{vol}).
\qed
\end{proof}
The joint density of the singular values of $\widetilde{\mathbf{T}}$ is obtained from
(\ref{svT}) after making the substitutions (\ref{s}).

\begin{thm}
Assume that $\mathbf{T}_{1} \sim \mathcal{MT}_{m \times n}^{\beta}(\nu,\mathbf{0},
\mathbf{I}_{m}, \mathbf{I}_{n})$ and let $\alpha_{1}, \dots, \alpha_{m}$, $\alpha_{1}> \cdots
>, \alpha_{m} > 0$, be its singular values. Then its joint density is
\begin{small}
\begin{equation}\label{svMT}
  \frac{2^{m} \pi^{\beta m^{2}/2 + \tau}\Gamma_{1}^{\beta}[\beta(\nu +
  mn)/2]}{\Gamma_{1}^{\beta}[\beta \nu/2]
  \Gamma_{m}^{\beta}[\beta m/2]\Gamma_{m}^{\beta}[\beta n/2]}
  \prod_{i=1}^{m} \alpha_{i}^{\beta(n-m+1)-1}
  \left(1+\sum_{i=1}^{m}\alpha_{i}^{2}\right)^{-\beta(\nu+mn)/2}
  \prod_{i<j}^{m}(\alpha_{i}^{2} - \alpha_{j}^{2})^{\beta}
\end{equation}
\end{small}
\end{thm}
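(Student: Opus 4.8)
The plan is to mirror the proof of Theorem~\ref{teosv}, replacing the determinantal radial factor of the matricvariate density by the trace radial factor of the matrix multivariate density. First I would specialise (\ref{mmt}) to the standardised parameters $\boldsymbol{\mu}=\mathbf{0}$, $\mathbf{\Sigma}=\mathbf{I}_{n}$ (with $\rho=1$), so that the density of $\mathbf{T}_{1}$ becomes
\[
  \frac{\Gamma_{1}^{\beta}[\beta(\nu+mn)/2]}{\pi^{\beta mn/2}\,\Gamma_{1}^{\beta}[\beta\nu/2]}
  \bigl(1+\tr\mathbf{T}_{1}^{*}\mathbf{T}_{1}\bigr)^{-\beta(\nu+mn)/2}.
\]
Then I would apply the singular value decomposition (\ref{svd}) to pass from the entries of $\mathbf{T}_{1}\in\mathcal{L}_{m,n}^{\beta}$ (with $n\ge m$) to the ordered singular values $\alpha_{1}>\cdots>\alpha_{m}>0$ together with the two orientation factors appearing in (\ref{svd}).

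The decisive simplification is that $\tr\mathbf{T}_{1}^{*}\mathbf{T}_{1}=\sum_{i=1}^{m}\alpha_{i}^{2}$, since the squared Frobenius norm is invariant under the orientation matrices occurring in the decomposition; hence the radial factor $\bigl(1+\sum_{i}\alpha_{i}^{2}\bigr)^{-\beta(\nu+mn)/2}$ is a function of the singular values alone and carries no dependence on the orientation. The Jacobian supplied by (\ref{svd}) and Lemma~\ref{lemsvd} contributes the two structural factors $\prod_{i}\alpha_{i}^{\beta(n-m+1)-1}$ and $\prod_{i<j}(\alpha_{i}^{2}-\alpha_{j}^{2})^{\beta}$, exactly as in (\ref{svT}), together with the constant $2^{m}$ and the power $\pi^{\tau}$.

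With the radial factor pulled outside, I would integrate over the two orientation factors by means of the volume formula (\ref{vol}); because the integrand no longer involves the orientation, this step is a pure volume computation, exactly as in Theorem~\ref{teosv}, and returns the factors $\Gamma_{m}^{\beta}[\beta m/2]$ and $\Gamma_{m}^{\beta}[\beta n/2]$ together with a power of $\pi$. Collecting these with the base normalisation of $\mathbf{T}_{1}$ yields (\ref{svMT}). As a consistency check, the same density is reachable through Theorem~\ref{teo5}: one extracts the eigenvalue density of $\mathbf{F}_{1}=\mathbf{T}_{1}\mathbf{T}_{1}^{*}$ from (\ref{MFII1}) and substitutes $\lambda_{i}=\alpha_{i}^{2}$, whose Jacobian $2^{m}\prod_{i}\alpha_{i}$ turns the exponent $\beta(n-m+1)/2-1$ into $\beta(n-m+1)-1$ and $\prod_{i<j}(\lambda_{i}-\lambda_{j})^{\beta}$ into $\prod_{i<j}(\alpha_{i}^{2}-\alpha_{j}^{2})^{\beta}$.

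No genuine analytic obstacle arises; the principal burden is the constant bookkeeping. The point requiring care is the assembly of the base normalisation $\Gamma_{1}^{\beta}[\beta(\nu+mn)/2]/(\pi^{\beta mn/2}\Gamma_{1}^{\beta}[\beta\nu/2])$ with the $2^{m}$ and $\pi^{\tau}$ of Lemma~\ref{lemsvd} and the orientation volumes of (\ref{vol}), so that the multivariate Gamma functions combine into $\Gamma_{m}^{\beta}[\beta m/2]\,\Gamma_{m}^{\beta}[\beta n/2]$ and the explicit power of $\pi$ collapses to $\pi^{\beta m^{2}/2+\tau}$, reproducing the constant in (\ref{svMT}). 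The only substantive difference from Theorem~\ref{teosv} is the replacement of the determinantal product $\prod_{i}(1+\delta_{i}^{2})^{-\beta(\nu+n)/2}$ by the single trace factor $(1+\sum_{i}\alpha_{i}^{2})^{-\beta(\nu+mn)/2}$; being still orientation-free, this factor does not interfere with the orientation integral.
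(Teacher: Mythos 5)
Your proposal is correct and takes essentially the same route as the paper: the paper proves this theorem by declaring it identical to the proof of Theorem \ref{teosv}, i.e.\ specialise the density (\ref{mmt}), apply the SVD Jacobian (\ref{svd}) of Lemma \ref{lemsvd} (the trace factor $1+\tr\mathbf{T}_{1}^{*}\mathbf{T}_{1}=1+\sum_{i}\alpha_{i}^{2}$ being orientation-free), and integrate out $\mathbf{V}_{1}\in\mathcal{V}_{m,n}^{\beta}$ and $\mathbf{W}\in\mathfrak{U}^{\beta}(m)$ via (\ref{vol}). Your constant bookkeeping, $2^{-m}\pi^{\tau}\cdot\frac{2^{m}\pi^{\beta mn/2}}{\Gamma_{m}^{\beta}[\beta n/2]}\cdot\frac{2^{m}\pi^{\beta m^{2}/2}}{\Gamma_{m}^{\beta}[\beta m/2]}$ combined with the base normalisation $\Gamma_{1}^{\beta}[\beta(\nu+mn)/2]/(\pi^{\beta mn/2}\Gamma_{1}^{\beta}[\beta\nu/2])$, correctly reproduces the constant in (\ref{svMT}).
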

\begin{proof}
The proof is identical to that given for Theorem \ref{teosv}. \qed
\end{proof}
Analogously, the joint density of the singular values of $\widetilde{\mathbf{T}}_{1}$ is
obtained from (\ref{svMT}), making the substitutions (\ref{s2}).

Finally, observe that $\delta_{i} = \sqrt{\eig_{i}(\mathbf{TT}^{*})}$  and $\alpha_{i} =
\sqrt{\eig_{i}(\mathbf{T}_{1}\mathbf{T}_{1}^{*})}$, where $\eig_{i}(\mathbf{A})$, $i = 1,
\dots, m$, denotes the $i$-th eigenvalue of $\mathbf{A}$. Let $\lambda_{i} =
\eig_{i}(\mathbf{TT}^{*}) = \eig_{i}(\mathbf{F})$ and $\gamma_{i} =
\eig_{i}(\mathbf{T}_{1}\mathbf{T}_{1}^{*}) = \eig_{i}(\mathbf{F}_{1})$, observing that, for
example, $\delta_{i} = \sqrt{\lambda_{i}}$. Then
$$
  \bigwedge_{i=1}^{m} d\delta_{i} = \bigwedge_{i=1}^{m} 2^{-m} \prod_{i=1}^{m}
  \lambda_{i}^{-1/2}d\lambda_{i},
$$
the corresponding joint density of $\lambda_{1}, \dots, \lambda_{m}$, $\lambda_{1} > \cdots
> \lambda_{m} > 0$ is obtained from (\ref{svT}) as
$$
    \frac{\pi^{\beta m^{2}+\tau}}{\Gamma_{m}^{\beta}[\beta m/2] \mathcal{B}_{m}^{\beta}[\beta \nu/2, \beta n/2]}
    \prod_{i=1}^{m} \lambda_{i}^{\beta(n-m+1)/2-1}(1+\lambda_{i})^{-\beta(\nu+n)/2}
    \prod_{i<j}^{m}(\lambda_{i} - \lambda_{j})^{\beta}.
$$
Analogously, the joint density of $\gamma_{1}, \dots, \gamma_{m}$, $\gamma_{1} > \cdots >
\gamma_{m}> 0$, is obtained from (\ref{svMT}) as
\begin{small}
$$
  \frac{\pi^{\beta m^{2}/2 + \tau}\Gamma_{1}^{\beta}[\beta(\nu +
  mn)/2]}{\Gamma_{1}^{\beta}[\beta \nu/2]
  \Gamma_{m}^{\beta}[\beta m/2]\Gamma_{m}^{\beta}[\beta n/2]}
  \prod_{i=1}^{m} \gamma_{i}^{\beta(n-m+1)/2-1}
  \left(1+\sum_{i=1}^{m}\gamma_{i}\right)^{-\beta(\nu+mn)/2}
  \prod_{i<j}^{m}(\gamma_{i} - \gamma_{j})^{\beta}.
$$
\end{small}
\begin{rem}
Observe that $\mathbf{Y}\in \mathfrak{L}^{\beta}_{m,n}$ has a matrix multivariate elliptically
contoured distribution for real normed division algebras if its density, with respect to the
Lebesgue measure, is given by (see \citet{jdggj:09a}):
\begin{equation}\label{mve}
  \frac{C^{\beta}(m,n)}{|\mathbf{\Sigma}|^{\beta n/2}|\mathbf{\Theta}|^{\beta m/2}}
  h\left\{\tr\left[\mathbf{\Sigma}^{-1}(\mathbf{Y}-\boldsymbol{\mu})\mathbf{\Theta}^{-1}
  (\mathbf{Y}- \boldsymbol{\mu})^{*}\right]\right\},
\end{equation}
where  $\boldsymbol{\mu}\in \mathfrak{L}^{\beta}_{m,n}$, $ \mathbf{\Sigma}\in
\mathfrak{P}^{\beta}_{m}$,  $ \mathbf{\Theta}\in \mathfrak{P}^{\beta}_{m}$. The function $h:
\mathfrak{F} \rightarrow [0,\infty)$ is termed the generator function, and it is such that
$\int_{\mathfrak{P}^{\beta}_{1}} u^{\beta nm-1}h(u^2)du < \infty$ and
$$
  C^{\beta}(m,n) = \frac{\Gamma[\beta mn/2]}{2 \pi^{\beta mn/2}} \left\{
    \int_{\mathfrak{P}^{\beta}_{1}} u^{\beta nm-1}h(u^2)du\right \}
$$
Such a distribution is denoted by $\mathbf{Y}\sim \mathcal{E}^{\beta}_{n\times
m}(\boldsymbol{\mu},\mathbf{\Sigma}, \mathbf{\Theta}, h)$, for the real case, see \citet{fz:90}
and \citet{gv:93}; and \citet{mdm:06} for the complex case. Observe that this class of matrix
multivariate distributions includes normal, contaminated normal, Pearson type II and VII, Kotz,
Jensen-Logistic, power exponential and Bessel distributions, among others; these distributions
have tails that are more or less weighted, and/or present a greater or smaller degree of
kurtosis than the normal distribution.

Assume that $\mathbf{Y} = (\build{\mathbf{Y}_{1}}{}{m \times n}\vdots\build{\mathbf{Y}_{2}}{}{m
\times \nu}) \sim \mathcal{E}^{\beta}_{m \times n+\nu}(\boldsymbol{0},\mathbf{I}_{m},
\mathbf{I}_{n+\nu}, h)$, $n,\nu \geq m$; and define, $\mathbf{T} =
\mathbf{L}^{-1}\mathbf{Y}_{1}$, where $\mathbf{L}$ is any square root of $\mathbf{V} =
\mathbf{Y}_{2}\mathbf{Y}_{2}^{*}$ such that $\mathbf{LL}^{*} = \mathbf{V}$. Then $\mathbf{T}
\sim \mathcal{T}_{m \times n}^{\beta}(\nu,\mathbf{0}, \mathbf{I}_{m},\mathbf{I}_{n})$. From
(\ref{mve}) the density of $\mathbf{Y}$ is
$$
  C^{\beta}(m,n+\nu) h\left\{\tr\left(\mathbf{Y}_{1}\mathbf{Y}_{1}^{*} +
  \mathbf{Y}_{2}\mathbf{Y}_{2}^{*}\right)\right\}.
$$
Let $\mathbf{V} = \mathbf{Y}_{2}\mathbf{Y}_{2}^{*}$ then by (\ref{lemW}), $(d\mathbf{Y}_{2}) =
2^{-m}|\mathbf{V}|^{\beta(\nu -m+1)/2-1}(d\mathbf{V}) (\mathbf{H}_{1}d\mathbf{H}_{1}^{*})$.
Thus, the marginal density of $\mathbf{Y}_{1}$ and $\mathbf{V}$ is obtained by integrating over
$\mathbf{H}_{1} \in \mathcal{V}_{m,n}^{\beta}$ by using (\ref{vol}), hence
$$
  \frac{C^{\beta}(m,n+\nu) \pi^{\beta \nu m/2}}{\Gamma_{m}^{\beta}[\beta \nu/2]} |\mathbf{V}|^{\beta(\nu -m+1)/2-1}
  h\left\{\tr\left(\mathbf{Y}_{1}\mathbf{Y}_{1}^{*} +  \mathbf{V}\right)\right\}.
$$
Now, let $\mathbf{T} = \mathbf{L}^{-1}\mathbf{Y}_{1}$, where $\mathbf{L}\mathbf{L}^{*}
=\mathbf{V}$, then by (\ref{lemlt})
$$
  (d\mathbf{Y}_{1})(d\mathbf{V}) =  |\mathbf{V}|^{\beta n/2}(d\mathbf{T})(d\mathbf{V}).
$$
Therefore, the joint density of $\mathbf{T}$ and $\mathbf{V}$ is
$$
  \frac{C^{\beta}(m,n+\nu) \pi^{\beta \nu m/2}}{\Gamma_{m}^{\beta}[\beta \nu/2]}
  |\mathbf{V}|^{\beta(n+\nu -m+1)/2-1} h\left\{\tr(\mathbf{I}^{m} + \mathbf{TT}^{*})\mathbf{V}\right\}.
$$
The desired result follows by applying the next equally,
$$
  \int_{\mathbf{V} \in \mathfrak{P}_{m}^{\beta}} |\mathbf{V}|^{\beta(n+\nu -m+1)/2-1}
  h\left\{\tr \mathbf{A}\mathbf{V}\right\}(d\mathbf{V}) =
  \frac{\Gamma_{m}^{\beta}[\beta(n+\nu)2]|\mathbf{A}|^{-\beta(n+\nu)/2}}
  {\pi^{\beta m(n+\nu)/2} C^{\beta}(m, n+\nu)},
$$
see \citet{jdggj:09a} and \citet{jdggj:10b}.\qed

Observe that in this case, $\mathbf{Y}_{1}$ and $\mathbf{Y}_{2}$ (or $\mathbf{V} =
\mathbf{Y}_{2}\mathbf{Y}_{2}^{*}$) are stochastically dependent. Furthermore, note that only
when the particular matrix multivariate elliptical distribution is the matrix multivariate
normal distribution, are $\mathbf{Y}_{1}$ and $\mathbf{Y}_{2}$ (or $\mathbf{V} =
\mathbf{Y}_{2}\mathbf{Y}_{2}^{*}$) independent. Therefore, we can say that the matricvariate
$T$ distribution is invariant under the family of matrix multivariate elliptical distributions
for real normed division algebras, and furthermore, its density is the same as when normality
is assumed. Analogously, it can be proved that the matrix multivariate $T$, matricvariate and
matrix multivariate beta type II distributions are invariant under the family of matrix
multivariate elliptical distributions for real normed division algebras. Furthermore, this
invariance prevails under other classes of elliptical models for real normed division algebras,
see \citet{fz:90}, \citet{gv:93} and \citet{jdggj:09a}.
\end{rem}

\section*{Conclusions}

Any topic in statistical literature, is usually first studied in the real case, later in the
complex case, later for the quaternion case and exceptionally for the octonion case. From the
results presented in this paper, the real, complex, quaternion and octonion cases are obtained
by simply replacing $\beta$ with $1,2,4$ or $8$, respectively. Furthermore, as observed by
\citet{k:84}, these results can be extended to hypercomplex cases, that is, for biquaternion
and bioctonion algebras. Then, from the results presented here, the hypercomplex cases are
obtained by replacing $\beta$ with $2\beta$.

\section*{Acknowledgements}
%The authors wish to thank the Associate Editor and the anonymous reviewers for their
%constructive comments on the preliminary version of this paper.
This research work was partially supported  by CONACYT-M\'exico, Research Grant No. \ 81512 and
IDI-Spain, Grants No. FQM2006-2271 and MTM2008-05785. This paper was written during J. A.
D\'{\i}az-Garc\'{\i}a's stay as a visiting professor at the Department of Statistics and O. R.
of the University of Granada, Spain.

\bibliographystyle{plain}

\appendix
\renewcommand{\theequation}{A-\arabic{equation}}
\setcounter{equation}{0}

\section*{Appendix}

A detailed discussion of real normed division algebras may be found in \citet{b:02} and
\citet{gr:87}. For convenience, we shall introduce some notation, although in general we adhere
to standard notation forms.

For our purposes, a \textbf{vector space} is always a finite-dimensional module over the field
of real numbers. An \textbf{algebra} $\mathfrak{F}$ is a vector space that is equipped with a
bilinear map $m: \mathfrak{F} \times \mathfrak{F} \rightarrow \mathfrak{F}$ termed
\emph{multiplication} and a nonzero element $1 \in \mathfrak{F}$ termed the \emph{unit} such
that $m(1,a) = m(a,1) = 1$. As usual, we abbreviate $m(a,b) = ab$ as $ab$. We do not assume
$\mathfrak{F}$ associative. Given an algebra, we freely think of real numbers as elements of
this algebra via the map $\omega \mapsto \omega 1$.

An algebra $\mathfrak{F}$ is a \textbf{division algebra} if given $a, b \in \mathfrak{F}$ with
$ab=0$, then either $a=0$ or $b=0$. Equivalently, $\mathfrak{F}$ is a division algebra if the
operation of left and right multiplications by any nonzero element is invertible. A
\textbf{normed division algebra} is an algebra $\mathfrak{F}$ that is also a normed vector
space with $||ab|| = ||a||||b||$. This implies that $\mathfrak{F}$ is a division algebra and
that $||1|| = 1$.

There are exactly four normed division algebras: real numbers ($\Re$), complex numbers
($\mathfrak{C}$), quaternions ($\mathfrak{H}$) and octonions ($\mathfrak{O}$), see
\citet{b:02}. We take into account that $\Re$, $\mathfrak{C}$, $\mathfrak{H}$ and
$\mathfrak{O}$ are the only normed division algebras; moreover, they are the only alternative
division algebras, and all division algebras have a real dimension of $1, 2, 4$ or $8$, which
is denoted by $\beta$, see \citet[Theorems 1, 2 and 3]{b:02}. In other branches of mathematics,
the parameters $\alpha = 2/\beta$ and $t = \beta/4$ are used, see \citet{er:05} and
\citet{k:84}, respectively.

Let ${\mathcal L}^{\beta}_{m,n}$ be the linear space of all $m \times n$ matrices of rank $m
\leq n$ over $\mathfrak{F}$ with $m$ distinct positive singular values, where $\mathfrak{F}$
denotes a \emph{real finite-dimensional normed division algebra}. Let $\mathfrak{F}^{m \times
n}$ be the set of all $m \times n$ matrices over $\mathfrak{F}$. The dimension of
$\mathfrak{F}^{m \times n}$ over $\Re$ is $\beta mn$. Let $\mathbf{A} \in \mathfrak{F}^{m
\times n}$, then $\mathbf{A}^{*} = \overline{\mathbf{A}}^{T}$ denotes the usual conjugate
transpose.

Table \ref{table1} sets out the equivalence between the same concepts in the four normed
division algebras.
\begin{table}[h]
  \centering
  \caption{Notation}\label{table1}
  \begin{footnotesize}
  \begin{tabular}{cccc|c}
    \hline
    % after \\: \hline or \cline{col1-col2} \cline{col3-col4} ...
    Real & Complex & Quaternion & Octonion & \begin{tabular}{c}
                                               Generic \\
                                               notation \\
                                             \end{tabular}\\
    \hline
    Semi-orthogonal & Semi-unitary & Semi-symplectic & \begin{tabular}{c}
                                                         Semi-exceptional \\
                                                         type \\
                                                       \end{tabular}
      & $\mathcal{V}_{m,n}^{\beta}$ \\
    Orthogonal & Unitary & Symplectic & \begin{tabular}{c}
                                                         Exceptional \\
                                                         type \\
                                                       \end{tabular} & $\mathfrak{U}^{\beta}(m)$ \\
    Symmetric & Hermitian & \begin{tabular}{c}
                              % after \\: \hline or \cline{col1-col2} \cline{col3-col4} ...
                              Quaternion \\
                              hermitian \\
                            \end{tabular}
     & \begin{tabular}{c}
                              % after \\: \hline or \cline{col1-col2} \cline{col3-col4} ...
                              Octonion \\
                              hermitian \\
                            \end{tabular} & $\mathfrak{S}_{m}^{\beta}$ \\
    \hline
  \end{tabular}
  \end{footnotesize}
\end{table}

In addition, let $\mathfrak{P}_{m}^{\beta}$ be the \emph{cone of positive definite matrices}
$\mathbf{S} \in \mathfrak{F}^{m \times m}$; then $\mathfrak{P}_{m}^{\beta}$ is an open subset
of ${\mathfrak S}_{m}^{\beta}$.

Let $\mathfrak{D}_{m}^{\beta}$ be the \emph{diagonal subgroup} of $\mathcal{L}_{m,m}^{\beta}$
consisting of all $\mathbf{D} \in \mathfrak{F}^{m \times m}$, $\mathbf{D} = \diag(d_{1},
\dots,d_{m})$.

For any matrix $\mathbf{X} \in \mathfrak{F}^{n \times m}$, $d\mathbf{X}$ denotes the\emph{
matrix of differentials} $(dx_{ij})$. Finally, we define the \emph{measure} or volume element
$(d\mathbf{X})$ when $\mathbf{X} \in \mathfrak{F}^{m \times n}, \mathfrak{S}_{m}^{\beta}$,
$\mathfrak{D}_{m}^{\beta}$ or $\mathcal{V}_{m,n}^{\beta}$, see \citet{d:02}.

If $\mathbf{X} \in \mathfrak{F}^{m \times n}$ then $(d\mathbf{X})$ (the Lebesgue measure in
$\mathfrak{F}^{m \times n}$) denotes the exterior product of the $\beta mn$ functionally
independent variables
$$
  (d\mathbf{X}) = \bigwedge_{i = 1}^{m}\bigwedge_{j = 1}^{n}dx_{ij} \quad \mbox{ where }
    \quad dx_{ij} = \bigwedge_{k = 1}^{\beta}dx_{ij}^{(k)}.
$$
If $\mathbf{S} \in \mathfrak{S}_{m}^{\beta}$ (or $\mathbf{S} \in \mathfrak{T}_{L}^{\beta}(m)$
is a lower triangular matrix) then $(d\mathbf{S})$ (the Lebesgue measure in
$\mathfrak{S}_{m}^{\beta}$ or in $\mathfrak{T}_{L}^{\beta}(m)$, the set of lower triangular
matrices) denotes the exterior product of the $m(m+1)\beta/2$ functionally independent
variables (or denotes the exterior product of the $m(m-1)\beta/2 + n$ functionally independent
variables, if $s_{ii} \in \Re$ for all $i = 1, \dots, m$)
$$
  (d\mathbf{S}) = \left\{
                    \begin{array}{ll}
                      \displaystyle\bigwedge_{i \leq j}^{m}\bigwedge_{k = 1}^{\beta}ds_{ij}^{(k)}, &  \\
                      \displaystyle\bigwedge_{i=1}^{m} ds_{ii}\bigwedge_{i < j}^{m}\bigwedge_{k = 1}^{\beta}
                      ds_{ij}^{(k)}, & \hbox{if } s_{ii} \in \Re.
                    \end{array}
                  \right.
$$
The context generally establishes the conditions on the elements of $\mathbf{S}$, that is, if
$s_{ij} \in \Re$, $\in \mathfrak{C}$, $\in \mathfrak{H}$ or $ \in \mathfrak{O}$. It is
considered that
$$
  (d\mathbf{S}) = \bigwedge_{i \leq j}^{m}\bigwedge_{k = 1}^{\beta}ds_{ij}^{(k)}
   \equiv \bigwedge_{i=1}^{m} ds_{ii}\bigwedge_{i < j}^{m}\bigwedge_{k =
1}^{\beta}ds_{ij}^{(k)}.
$$
Observe, too, that for the Lebesgue measure $(d\mathbf{S})$ defined thus, it is required that
$\mathbf{S} \in \mathfrak{P}_{m}^{\beta}$, that is, $\mathbf{S}$ must be a non singular
Hermitian matrix (Hermitian definite positive matrix).

If $\mathbf{\Lambda} \in \mathfrak{D}_{m}^{\beta}$ then $(d\mathbf{\Lambda})$ (the Legesgue
measure in $\mathfrak{D}_{m}^{\beta}$) denotes the exterior product of the $\beta m$
functionally independent variables
$$
  (d\mathbf{\Lambda}) = \bigwedge_{i = 1}^{n}\bigwedge_{k = 1}^{\beta}d\lambda_{i}^{(k)}.
$$
If $\mathbf{H}_{1} \in \mathcal{V}_{m,n}^{\beta}$ then
$$
  (\mathbf{H}^{*}_{1}d\mathbf{H}_{1}) = \bigwedge_{i=1}^{m} \bigwedge_{j =i+1}^{n}
  \mathbf{h}_{j}^{*}d\mathbf{h}_{i}.
$$
where $\mathbf{H} = (\mathbf{H}^{*}_{1}|\mathbf{H}^{*}_{2})^{*} = (\mathbf{h}_{1}, \dots,
\mathbf{h}_{m}|\mathbf{h}_{m+1}, \dots, \mathbf{h}_{n})^{*} \in \mathfrak{U}^{\beta}(n)$. It
can be proved that this differential form does not depend on the choice of the $\mathbf{H}_{2}$
matrix. When $n = 1$; $\mathcal{V}^{\beta}_{m,1}$ defines the unit sphere in
$\mathfrak{F}^{m}$. This is, of course, an $(m-1)\beta$- dimensional surface in
$\mathfrak{F}^{m}$. When $n = m$ and denoting $\mathbf{H}_{1}$ by $\mathbf{H}$,
$(\mathbf{H}d\mathbf{H}^{*})$ is termed the \emph{Haar measure} on $\mathfrak{U}^{\beta}(m)$.

The surface area or volume of the Stiefel manifold $\mathcal{V}^{\beta}_{m,n}$ is
\begin{equation}\label{vol}
    \Vol(\mathcal{V}^{\beta}_{m,n}) = \int_{\mathbf{H}_{1} \in
  \mathcal{V}^{\beta}_{m,n}} (\mathbf{H}_{1}d\mathbf{H}^{*}_{1}) =
  \frac{2^{m}\pi^{mn\beta/2}}{\Gamma^{\beta}_{m}[n\beta/2]},
\end{equation}
where $\Gamma^{\beta}_{m}[a]$ denotes the multivariate \emph{Gamma function} for the space
$\mathfrak{S}_{m}^{\beta}$, and is defined by
\begin{eqnarray*}
% \nonumber to remove numbering (before each equation)
  \Gamma_{m}^{\beta}[a] &=& \displaystyle\int_{\mathbf{A} \in \mathfrak{P}_{m}^{\beta}}
  \etr\{-\mathbf{A}\} |\mathbf{A}|^{a-(m-1)\beta/2 - 1}(d\mathbf{A}) \\
    &=& \pi^{m(m-1)\beta/4}\displaystyle\prod_{i=1}^{m} \Gamma[a-(i-1)\beta/2],
\end{eqnarray*}
where $\etr(\cdot) = \exp(\tr(\cdot))$, $|\cdot|$ denotes the determinant and $\re(a)
> (m-1)\beta/2$, see \citet{gr:87}. Similarly, from \citet{h:55} the \emph{multivariate beta function} for the
space $\mathfrak{S}^{\beta}_{m}$, can be defined as
\begin{eqnarray}
    \mathcal{B}_{m}^{\beta}[b,a] &=& \int_{\mathbf{0}<\mathbf{B}<\mathbf{I}_{m}}
    |\mathbf{B}|^{a-(m-1)\beta/2-1} |\mathbf{I}_{m} - \mathbf{B}|^{b-(m+1)\beta/2-1}
    (d\mathbf{B}) \nonumber\\
    &=& \int_{\mathbf{A} \in \mathfrak{P}_{m}^{\beta}} |\mathbf{A}|^{a-(m-1)\beta/2-1}
    |\mathbf{I}_{m} + \mathbf{A}|^{-(a+b)} (d\mathbf{A}) \nonumber\\ \label{beta}
    &=& \frac{\Gamma_{m}^{\beta}[a] \Gamma_{m}^{\beta}[b]}{\Gamma_{m}^{\beta}[a+b]},
\end{eqnarray}
where $\mathbf{A} = (\mathbf{I}-\mathbf{B})^{-1} -\mathbf{I}$, Re$(a) > (m-1)\beta/2$ and
Re$(b)> (m-1)\beta/2$.

Now, we show two Jacobians in terms of the $\beta$ parameter, which are based on the work of
\citet{k:84} and \citet{d:02}. These results are proposed as extensions of real, complex or
quaternion cases, see  \citet{j:64}, \citet{k:65}, \citet{me:91}, \citet{rva:05a} and
\citet{lx:09}.

\begin{lem}\label{lemlt}
Let $\mathbf{X}$ and $\mathbf{Y} \in {\mathcal L}_{m,n}^{\beta}$, and let $\mathbf{Y} =
\mathbf{AXB} + \mathbf{C}$, where $\mathbf{A} \in {\mathcal L}_{m,m}^{\beta}$, $\mathbf{B} \in
{\mathcal L}_{n,n}^{\beta}$ and $\mathbf{C} \in {\mathcal L}_{m,n}^{\beta}$ are constant
matrices. Then
\begin{equation}\label{lt}
    (d\mathbf{Y}) = |\mathbf{A}^{*}\mathbf{A}|^{\beta n/2} |\mathbf{B}^{*}\mathbf{B}|^{\beta
    m/2}(d\mathbf{X}).
\end{equation}
\end{lem}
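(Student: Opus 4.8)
The plan is to obtain the Jacobian by first stripping off the translation and then factoring the remaining linear map into pieces whose action on the volume element is transparent. First I would observe that translation by the constant matrix $\mathbf{C}$ is measure preserving, so the volume element $(d\mathbf{Y})$ associated with $\mathbf{Y} = \mathbf{AXB} + \mathbf{C}$ agrees with the one coming from the purely linear map $\mathbf{X} \mapsto \mathbf{AXB}$; hence $\mathbf{C}$ can be discarded from the start. Next I would write $\mathbf{X} \mapsto \mathbf{AXB}$ as the composition of the left multiplication $\mathbf{X} \mapsto \mathbf{AX}$ with the right multiplication $\mathbf{X} \mapsto \mathbf{XB}$, so that the total Jacobian is the product of the two separate Jacobians.

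For the left multiplication $\mathbf{X}\mapsto \mathbf{AX}$ with $\mathbf{A}\in\mathcal{L}_{m,m}^{\beta}$, I would invoke a singular value decomposition $\mathbf{A} = \mathbf{P}\,\mathbf{\Lambda}\,\mathbf{Q}^{*}$, where $\mathbf{P},\mathbf{Q}\in\mathfrak{U}^{\beta}(m)$ and $\mathbf{\Lambda}=\diag(\lambda_{1},\dots,\lambda_{m})$ collects the positive singular values. The maps $\mathbf{X}\mapsto\mathbf{PX}$ and $\mathbf{X}\mapsto\mathbf{Q}^{*}\mathbf{X}$ are isometries of $\mathfrak{F}^{m\times n}$ for its Euclidean structure and therefore have unit Jacobian, while $\mathbf{X}\mapsto\mathbf{\Lambda X}$ rescales the $i$-th row of $\mathbf{X}$, an element of $\mathfrak{F}^{n}$ of real dimension $\beta n$, by the positive real scalar $\lambda_{i}$, contributing a factor $\lambda_{i}^{\beta n}$. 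Multiplying over $i$ yields $(\prod_{i}\lambda_{i})^{\beta n}=|\mathbf{A}^{*}\mathbf{A}|^{\beta n/2}$, since $\det(\mathbf{A}^{*}\mathbf{A})=\prod_{i}\lambda_{i}^{2}$. Applying the same reasoning to the columns for the right multiplication by $\mathbf{B}$ gives the factor $|\mathbf{B}^{*}\mathbf{B}|^{\beta m/2}$, and combining the two produces exactly (\ref{lt}).

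The step I expect to demand the most care is the simultaneous passage through all four normed division algebras. Over $\Re$, $\mathfrak{C}$ and $\mathfrak{H}$ the singular value decomposition, the isometry property of orthogonal, unitary and symplectic left multiplication, and the identity $\det(\mathbf{A}^{*}\mathbf{A})=\prod_{i}\lambda_{i}^{2}$ are all available, and the exponents $\lambda_{i}^{\beta}$ with $\beta=1,2,4$ follow directly from the real, complex and quaternionic multiplication tables. Over $\mathfrak{O}$ the non-associativity obstructs a naive factorization both of the map and of the determinant, so the octonion instance is best asserted in the conjectural spirit adopted elsewhere in the paper, with $\beta=8$ furnishing the natural continuation of the real-dimension count. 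Finally, I would note a purely computational alternative that corroborates the geometric argument: expressing the underlying real $\beta mn\times\beta mn$ matrix of the linear map through the regular representation of $\mathfrak{F}$ and evaluating its determinant directly, following \citet{k:84} and \citet{d:02}, returns the same constants $|\mathbf{A}^{*}\mathbf{A}|^{\beta n/2}$ and $|\mathbf{B}^{*}\mathbf{B}|^{\beta m/2}$.
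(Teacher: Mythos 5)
Your argument is correct for $\beta=1,2,4$, and the first thing to say is that it is \emph{more} than the paper offers: Lemma \ref{lemlt} is stated in the Appendix with no proof whatsoever, presented only as ``based on the work of'' \citet{k:84} and \citet{d:02} and ``proposed as extensions'' of the classical real, complex and quaternion Jacobians of \citet{j:64}, \citet{k:65}, \citet{me:91}, \citet{rva:05a} and \citet{lx:09}, with the octonion instance implicitly covered by the paper's blanket declaration in the Introduction that octonionic results are included only as conjectures. Your route --- discard the translation, split $\mathbf{X}\mapsto\mathbf{AXB}$ into left and right multiplications, pass each through an SVD so that the unitary factors act as real-linear isometries of $\mathfrak{F}^{m\times n}$ (hence Jacobian of modulus one) and the diagonal factor rescales each row, an element of real dimension $\beta n$, by the positive scalar $\lambda_{i}$ --- is the standard proof of the real case (see \citet{m:82}) and it does extend verbatim to $\mathfrak{C}$ and $\mathfrak{H}$: the symplectic SVD exists, and the identification $(\prod_{i}\lambda_{i})^{\beta n}=|\mathbf{A}^{*}\mathbf{A}|^{\beta n/2}$ is legitimate in the quaternion case because $\mathbf{A}^{*}\mathbf{A}$ is Hermitian positive definite, so its (Moore) determinant is the product of its real eigenvalues $\lambda_{i}^{2}$, which is exactly what the paper's $|\cdot|$ must mean for the statement to parse. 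You also put your finger on the one genuine obstruction, non-associativity over $\mathfrak{O}$, which breaks both the factorization $(\mathbf{AX})\mathbf{B}$ and the determinant calculus; treating $\beta=8$ as conjectural is precisely the paper's own stance, so there is no gap relative to what is actually claimed. Two small points of hygiene: the isometry step should conclude $|\det|=1$ rather than $\det=1$, since a Jacobian is the absolute value of the determinant of the underlying real $\beta mn\times\beta mn$ linear map; and you should note that $\mathbf{A}\in\mathcal{L}_{m,m}^{\beta}$ guarantees full rank, so all $\lambda_{i}>0$ and the map is a diffeomorphism, which your scaling count tacitly requires. Your closing remark --- that computing the determinant of the real matrix of the map through the regular representation of $\mathfrak{F}$, as in \citet{k:84} and \citet{d:02}, gives the same constants --- is in fact the computation the paper is silently delegating to those references, so your proposal and the paper's citation trail corroborate one another.
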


\begin{lem}\label{lemhlt}
Let $\mathbf{X}$ and $\mathbf{Y} \in \mathfrak{P}_{m}^{\beta}$, and let $\mathbf{Y} =
\mathbf{AXA^{*}} + \mathbf{C}$, where $\mathbf{A}$ and $\mathbf{C} \in {\mathcal
L}_{m,m}^{\beta}$ are constant matrices. Then
\begin{equation}\label{hlt}
    (d\mathbf{Y}) = |\mathbf{A}^{*}\mathbf{A}|^{\beta(m-1)/2+1} (d\mathbf{X}).
\end{equation}
\end{lem}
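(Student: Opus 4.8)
The plan is to treat $\mathbf{Y} = \mathbf{A}\mathbf{X}\mathbf{A}^{*} + \mathbf{C}$ as an affine map of the real vector space $\mathfrak{S}_{m}^{\beta}$ of Hermitian matrices (of real dimension $\beta m(m-1)/2 + m$: the $m$ real diagonal entries together with the $\binom{m}{2}$ off-diagonal entries, each carrying $\beta$ real components) into itself. Since $\mathbf{C}$ is constant it merely shifts the map and does not affect its differential, so I may assume $\mathbf{C} = \mathbf{0}$ and compute the Jacobian of the linear endomorphism $\phi_{\mathbf{A}}\colon \mathbf{X} \mapsto \mathbf{A}\mathbf{X}\mathbf{A}^{*}$; the factor in $(d\mathbf{Y}) = J(\mathbf{A})(d\mathbf{X})$ is the absolute value of its determinant. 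Two structural facts drive the computation. First, $\phi_{\mathbf{A}\mathbf{B}} = \phi_{\mathbf{A}}\circ\phi_{\mathbf{B}}$, so $J$ is multiplicative, $J(\mathbf{A}\mathbf{B}) = J(\mathbf{A})J(\mathbf{B})$. Second, if $\mathbf{U} \in \mathfrak{U}^{\beta}(m)$ then $\phi_{\mathbf{U}}$ preserves the inner product $\tr(\mathbf{X}\mathbf{Y}^{*})$ on $\mathfrak{S}_{m}^{\beta}$, because $\tr(\mathbf{U}\mathbf{X}\mathbf{U}^{*}(\mathbf{U}\mathbf{Y}\mathbf{U}^{*})^{*}) = \tr(\mathbf{X}\mathbf{Y}^{*})$; hence $\phi_{\mathbf{U}}$ is orthogonal and $J(\mathbf{U}) = 1$.

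Next I would reduce an arbitrary $\mathbf{A} \in \mathcal{L}_{m,m}^{\beta}$ to a diagonal matrix by a singular value decomposition $\mathbf{A} = \mathbf{U}_{1}\mathbf{D}\mathbf{U}_{2}$, with $\mathbf{U}_{1},\mathbf{U}_{2} \in \mathfrak{U}^{\beta}(m)$ and $\mathbf{D} = \diag(\sigma_{1},\dots,\sigma_{m}) \in \mathfrak{D}_{m}^{\beta}$ whose entries are the positive real singular values of $\mathbf{A}$. By multiplicativity and unitary invariance, $J(\mathbf{A}) = J(\mathbf{U}_{1})J(\mathbf{D})J(\mathbf{U}_{2}) = J(\mathbf{D})$, so everything reduces to the diagonal case. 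For real diagonal $\mathbf{D}$ one has $\mathbf{D}^{*} = \mathbf{D}$ and $(\mathbf{D}\mathbf{X}\mathbf{D})_{ij} = \sigma_{i}\sigma_{j}x_{ij}$, the real scalars $\sigma_{i}$ commuting with the algebra entries. Counting real coordinates, each diagonal entry $x_{ii}\in\Re$ is scaled by $\sigma_{i}^{2}$, and each off-diagonal entry $x_{ij}$ with $i<j$, carrying $\beta$ real components, is scaled by $(\sigma_{i}\sigma_{j})^{\beta}$; since each index occurs in exactly $m-1$ off-diagonal pairs, $J(\mathbf{D}) = \prod_{i}\sigma_{i}^{2}\prod_{i<j}(\sigma_{i}\sigma_{j})^{\beta} = \prod_{i}\sigma_{i}^{\beta(m-1)+2}$. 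Finally $\mathbf{A}^{*}\mathbf{A} \in \mathfrak{P}_{m}^{\beta}$ has eigenvalues $\sigma_{i}^{2}$, so $|\mathbf{A}^{*}\mathbf{A}| = \prod_{i}\sigma_{i}^{2}$ and $J(\mathbf{A}) = \left(\prod_{i}\sigma_{i}^{2}\right)^{\beta(m-1)/2+1} = |\mathbf{A}^{*}\mathbf{A}|^{\beta(m-1)/2+1}$, which is exactly (\ref{hlt}). As a check, $\beta = 1$ returns the classical symmetric-case value $|\mathbf{A}|^{m+1}$.

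The main obstacle is genuinely the octonion case $\beta = 8$: the non-associativity of $\mathfrak{O}$ undermines both the composition identity $\phi_{\mathbf{A}\mathbf{B}} = \phi_{\mathbf{A}}\circ\phi_{\mathbf{B}}$ and the existence and uniqueness of the singular value decomposition used in the reduction, and even $|\mathbf{A}^{*}\mathbf{A}|$ must be read through the spectrum of a Hermitian matrix rather than through a multiplicative determinant. I would therefore carry out the argument rigorously for $\beta \in \{1,2,4\}$, where the polar and singular value machinery and the multiplicativity of $J$ are valid, in line with \citet{j:64}, \citet{k:65} and \citet{lx:09} and the Jacobian techniques of \citet{k:84} and \citet{d:02}, and then obtain the octonion statement by the same formal $\beta$-substitution used throughout the paper, consistent with its treatment of the octonion results as conjectures.
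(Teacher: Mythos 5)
Your argument is correct, but note that the paper never proves this lemma at all: Lemma \ref{lemhlt} appears in the appendix as one of two Jacobians stated ``in terms of the $\beta$ parameter,'' with the authors merely citing \citet{k:84} and \citet{d:02} and proposing the formula as an extension of the known real, complex and quaternion cases. So there is no paper-internal proof to compare against, and what you have written is a self-contained derivation along the classical route underlying those references: multiplicativity of the Jacobian of $\phi_{\mathbf{A}}$, orthogonality of $\phi_{\mathbf{U}}$ for $\mathbf{U}\in\mathfrak{U}^{\beta}(m)$, reduction to the diagonal case by SVD, and the coordinate count $J(\mathbf{D})=\prod_{i}\sigma_{i}^{\beta(m-1)+2}$, which indeed yields $|\mathbf{A}^{*}\mathbf{A}|^{\beta(m-1)/2+1}$ and reproduces the classical values $|\mathbf{A}|^{m+1}$ ($\beta=1$) and $|\det\mathbf{A}|^{2m}$ ($\beta=2$). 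Two small points deserve to be made explicit for $\beta=4$: the trace is not cyclic over $\mathfrak{H}$, so the inner product in your orthogonality step should be $\re\tr(\mathbf{X}\mathbf{Y}^{*})$, for which cyclic invariance does hold; and $|\mathbf{A}^{*}\mathbf{A}|$ must be read as the Moore determinant of the Hermitian matrix $\mathbf{A}^{*}\mathbf{A}$, which your identification $|\mathbf{A}^{*}\mathbf{A}|=\prod_{i}\sigma_{i}^{2}$ through the spectrum handles correctly. Your decision to restrict the rigorous argument to $\beta\in\{1,2,4\}$ and treat $\beta=8$ by formal substitution is exactly aligned with the paper's own stance, since it explicitly includes the octonion case only as a conjecture; in that sense your proposal is not merely consistent with the paper but supplies the proof the paper leaves implicit.
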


\begin{lem}[Singular value decomposition, $SVD$]\label{lemsvd}
Let $\mathbf{X} \in {\mathcal L}_{m,n}^{\beta}$, such that $\mathbf{X} =
\mathbf{V}_{1}\mathbf{DW}^{*}$ with $\mathbf{V}_{1} \in {\mathcal V}_{m,n}^{\beta}$,
$\mathbf{W} \in \mathfrak{U}^{\beta}(m)$ and $\mathbf{D} = \diag(d_{1}, \cdots,d_{m}) \in
\mathfrak{D}_{m}^{1}$, $d_{1}> \cdots > d_{m} > 0$. Then
\begin{equation}\label{svd}
    (d\mathbf{X}) = 2^{-m}\pi^{\tau} \prod_{i = 1}^{m} d_{i}^{\beta(n - m + 1) -1}
    \prod_{i < j}^{m}(d_{i}^{2} - d_{j}^{2})^{\beta} (d\mathbf{D}) (\mathbf{V}_{1}d\mathbf{V}_{1}^{*})
    (\mathbf{W}d\mathbf{W}^{*}),
\end{equation}
where
$$
  \tau = \left\{
             \begin{array}{rl}
               0, & \beta = 1; \\
               -m, & \beta = 2; \\
               -2m, & \beta = 4; \\
               -4m, & \beta = 8.
             \end{array}
           \right.
$$
\end{lem}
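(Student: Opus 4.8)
The plan is to obtain the volume element by differentiating the factorization $\mathbf{X}=\mathbf{V}_{1}\mathbf{D}\mathbf{W}^{*}$ and carrying $(d\mathbf{X})$ onto the tangent directions of the three factors by an orthogonal change of frame, whose Jacobian is trivial by Lemma~\ref{lemlt}. First I would extend $\mathbf{V}_{1}$ to a full matrix $\mathbf{V}=(\mathbf{V}_{1}\,\vdots\,\mathbf{V}_{2})\in\mathfrak{U}^{\beta}(n)$ and compute
$$
  d\mathbf{X}=(d\mathbf{V}_{1})\mathbf{D}\mathbf{W}^{*}+\mathbf{V}_{1}(d\mathbf{D})\mathbf{W}^{*}
  +\mathbf{V}_{1}\mathbf{D}(d\mathbf{W}^{*}).
$$
Left multiplication by $\mathbf{V}^{*}$ and right multiplication by $\mathbf{W}$ fix $(d\mathbf{X})$ up to the unit factor $|\mathbf{V}^{*}\mathbf{V}|^{\beta m/2}|\mathbf{W}^{*}\mathbf{W}|^{\beta n/2}=1$ by (\ref{lt}), yielding the transformed array $\mathbf{V}^{*}(d\mathbf{X})\mathbf{W}$. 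Differentiating $\mathbf{V}_{1}^{*}\mathbf{V}_{1}=\mathbf{I}_{m}$ and $\mathbf{W}^{*}\mathbf{W}=\mathbf{I}_{m}$ shows that $\mathbf{A}:=\mathbf{V}_{1}^{*}(d\mathbf{V}_{1})$ and $\mathbf{B}:=(d\mathbf{W}^{*})\mathbf{W}$ are skew-Hermitian, while $\mathbf{V}_{2}^{*}(d\mathbf{V}_{1})$ records the remaining Stiefel directions.

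The core computation then reads off the blocks of $\mathbf{V}^{*}(d\mathbf{X})\mathbf{W}$, whose upper part is $\mathbf{A}\mathbf{D}+d\mathbf{D}+\mathbf{D}\mathbf{B}$ sitting over the lower part $[\mathbf{V}_{2}^{*}(d\mathbf{V}_{1})]\mathbf{D}$. The lower $(n-m)\times m$ block scales column $i$ by $d_{i}$, which contributes $\prod_{i}d_{i}^{\beta(n-m)}$ and supplies the $\mathbf{V}_{2}$-part of $(\mathbf{V}_{1}d\mathbf{V}_{1}^{*})$. The real parts of the $m$ diagonal entries of the upper block are exactly $(d\mathbf{D})$. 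For each pair $i<j$, the entries in positions $(i,j)$ and $(j,i)$ depend linearly on $\mathbf{A}_{ij}$ and $\mathbf{B}_{ij}$ through a map whose determinant is, in absolute value, $d_{i}^{2}-d_{j}^{2}$; applied to each of the $\beta$ real components this gives $\prod_{i<j}(d_{i}^{2}-d_{j}^{2})^{\beta}$, together with the off-diagonal part of $(\mathbf{V}_{1}d\mathbf{V}_{1}^{*})$ and with $(\mathbf{W}d\mathbf{W}^{*})$.

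The hard part is the diagonal imaginary directions, which is where both $\tau$ and the final unit in the exponent $\beta(n-m+1)-1$ are produced. A degree count shows that $(d\mathbf{D})(\mathbf{V}_{1}d\mathbf{V}_{1}^{*})(\mathbf{W}d\mathbf{W}^{*})$ carries $\beta mn-(\beta-1)m$ functionally independent components, whereas $(d\mathbf{X})$ carries $\beta mn$; the deficit $(\beta-1)m$ is exactly the dimension of the residual phase freedom $\mathbf{V}_{1}\mapsto\mathbf{V}_{1}\boldsymbol{\Phi}$, $\mathbf{W}\mapsto\mathbf{W}\boldsymbol{\Phi}$ with $\boldsymbol{\Phi}=\diag(\phi_{1},\dots,\phi_{m})$, $|\phi_{i}|=1$, under which $\mathbf{D}$ stays real. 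For $\beta>1$ each diagonal entry $i$ of the upper block equals $dd_{i}+d_{i}(\mathbf{A}_{ii}+\mathbf{B}_{ii})$, whose $\beta-1$ imaginary components carry these phase directions scaled by $d_{i}$; accounting for them over the unit spheres $S^{\beta-1}$ and normalizing for the phase redundancy between $\mathbf{V}_{1}$ and $\mathbf{W}$ is what upgrades $\prod_{i}d_{i}^{\beta(n-m)}$ to $\prod_{i}d_{i}^{\beta(n-m+1)-1}$ and generates the constant $2^{-m}\pi^{\tau}$ with $\tau$ as tabulated. I would pin down the bookkeeping by matching $\beta=1,2,4$ against the real, complex and quaternion singular value Jacobians of \citet{j:64}, \citet{k:65}, \citet{me:91}, \citet{rva:05a} and \citet{lx:09}, following the general construction of \citet{k:84} and \citet{d:02}; for $\beta=8$ the non-associativity of $\mathfrak{O}$ obstructs the repeated left and right multiplications above, so that instance is retained only as a conjecture, as elsewhere in the paper.
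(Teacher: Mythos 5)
The paper contains no proof of this lemma to compare against: Lemma \ref{lemsvd} is stated in the Appendix as one of the Jacobians ``based on the work of \citet{k:84} and \citet{d:02}'' and ``proposed as extensions of real, complex or quaternion cases,'' with the classical instances delegated to \citet{j:64}, \citet{k:65}, \citet{me:91}, \citet{rva:05a} and \citet{lx:09}, and with the octonion row covered by the paper's blanket caveat that octonion results are conjectures. Your proposal therefore supplies strictly more than the paper does, and its skeleton is the correct classical exterior-differential-form derivation: conjugating $d\mathbf{X}$ into $\mathbf{V}^{*}(d\mathbf{X})\mathbf{W}$, identifying $\mathbf{A}=\mathbf{V}_{1}^{*}(d\mathbf{V}_{1})$ and $\mathbf{B}=(d\mathbf{W}^{*})\mathbf{W}$ as skew-Hermitian, reading $(d\mathbf{D})$ off the real diagonal of $\mathbf{A}\mathbf{D}+d\mathbf{D}+\mathbf{D}\mathbf{B}$ (the diagonal entries of skew-Hermitian matrices having no real part), extracting $\pm(d_{i}^{2}-d_{j}^{2})$ per real component from each off-diagonal pair to get the Vandermonde power $\beta$, and $\prod_{i}d_{i}^{\beta(n-m)}$ from the lower block $[\mathbf{V}_{2}^{*}(d\mathbf{V}_{1})]\mathbf{D}$. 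This is exactly how the cited real, complex and quaternion proofs run, and your degree count $\beta mn-(\beta-1)m$ for the right-hand side, with the deficit identified as the gauge freedom $\mathbf{V}_{1}\mapsto\mathbf{V}_{1}\boldsymbol{\Phi}$, $\mathbf{W}\mapsto\mathbf{W}\boldsymbol{\Phi}$ of dimension $(\beta-1)m$, is the right mechanism for both the extra $d_{i}^{\beta-1}$ in the exponent and the constant $\pi^{\tau}$.

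Two points to tighten. First, Lemma \ref{lemlt} is stated for \emph{constant} $\mathbf{A}$, $\mathbf{B}$, whereas your $\mathbf{V}$ and $\mathbf{W}$ depend on $\mathbf{X}$; the correct justification is pointwise: at each point, $d\mathbf{X}\mapsto\mathbf{V}^{*}(d\mathbf{X})\mathbf{W}$ is a unimodular linear map of the tangent space, so the wedge of its $\beta mn$ real components is unchanged --- a standard fix, but it should be said. Second, the constant is the one place where your argument is not yet a proof: you defer it to matching known cases, which is legitimate (and no worse than the paper), but note what honest bookkeeping gives. The per-index normalization your phase argument produces is $1/\Vol(S^{\beta-1})=\Gamma(\beta/2)/(2\pi^{\beta/2})$, which equals $\tfrac12$, $\tfrac{1}{2\pi}$, $\tfrac{1}{2\pi^{2}}$ for $\beta=1,2,4$ --- reproducing $2^{-m}\pi^{\tau}$ exactly as tabulated --- but for $\beta=8$ it equals $\Gamma(4)/(2\pi^{4})=3/\pi^{4}$, so your construction, taken literally, would yield $3^{m}\pi^{-4m}$ rather than the lemma's $2^{-m}\pi^{-4m}$: the tabulated $\tau=-4m$ silently drops the factor $\Gamma(4)=6$. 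Together with the non-associativity obstruction you already flag, this is concrete evidence that the $\beta=8$ row is a pattern extrapolation rather than an output of this computation; your decision to retain it only as a conjecture is exactly consistent with the paper's own stance.
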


\begin{lem}\label{lemW}
Let $\mathbf{X} \in {\mathcal L}_{m,n}^{\beta}$, and  $\mathbf{S} = \mathbf{X}\mathbf{X}^{*}
\in \mathfrak{P}_{m}^{\beta}.$ Then
\begin{equation}\label{w}
    (d\mathbf{X}) = 2^{-m} |\mathbf{S}|^{\beta(n - m + 1)/2 - 1}
    (d\mathbf{S})(\mathbf{V}_{1}d\mathbf{V}_{1}^{*}).
\end{equation}
\end{lem}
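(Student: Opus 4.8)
The statement is a change-of-variables (volume-element) formula for the map $\mathbf{X}\mapsto\mathbf{S}=\mathbf{X}\mathbf{X}^{*}$: it factorizes Lebesgue measure on $\mathcal{L}_{m,n}^{\beta}$ into a ``radial'' part $\mathbf{S}\in\mathfrak{P}_{m}^{\beta}$ and an ``angular'' part $\mathbf{V}_{1}\in\mathcal{V}_{m,n}^{\beta}$. The plan is to realize this factorization explicitly and compute the Jacobian by the method of exterior differential forms, the same technique underlying Lemmas \ref{lemlt}--\ref{lemsvd}.

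First I would introduce the triangular (polar/$LQ$-type) factorization $\mathbf{X}=\mathbf{L}\mathbf{V}_{1}$, where $\mathbf{L}$ is lower triangular with real positive diagonal $\ell_{11},\dots,\ell_{mm}$ and $\mathbf{V}_{1}\in\mathcal{V}_{m,n}^{\beta}$ satisfies $\mathbf{V}_{1}\mathbf{V}_{1}^{*}=\mathbf{I}_{m}$; this choice of square root makes $\mathbf{S}=\mathbf{X}\mathbf{X}^{*}=\mathbf{L}\mathbf{L}^{*}$ the Cholesky factorization of $\mathbf{S}$. Differentiating $\mathbf{X}=\mathbf{L}\mathbf{V}_{1}$ and projecting onto the moving frame (using $\mathbf{V}_{1}\mathbf{V}_{1}^{*}=\mathbf{I}_{m}$, hence $\mathbf{V}_{1}\,d\mathbf{V}_{1}^{*}+d\mathbf{V}_{1}\,\mathbf{V}_{1}^{*}=\mathbf{0}$) splits the $\beta mn$ independent one-forms into the $\beta m(m-1)/2+m$ forms assembling $(d\mathbf{L})$ and the remaining ones assembling the Stiefel element $(\mathbf{V}_{1}\,d\mathbf{V}_{1}^{*})$. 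Wedging everything, the factor $\mathbf{L}$ multiplying $d\mathbf{V}_{1}$ supplies the diagonal weight, so that
\begin{equation*}
  (d\mathbf{X})=\prod_{i=1}^{m}\ell_{ii}^{\,\beta(n-i+1)-1}\,(d\mathbf{L})\,(\mathbf{V}_{1}\,d\mathbf{V}_{1}^{*}).
\end{equation*}

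Next I would invoke the $\beta$-Cholesky Jacobian for $\mathbf{S}=\mathbf{L}\mathbf{L}^{*}$ on $\mathfrak{P}_{m}^{\beta}$, namely $(d\mathbf{S})=2^{m}\prod_{i=1}^{m}\ell_{ii}^{\,\beta(m-i)+1}(d\mathbf{L})$, and eliminate $(d\mathbf{L})$ between the two displays. Since $|\mathbf{S}|=\prod_{i=1}^{m}\ell_{ii}^{2}$, the surviving diagonal power is $\prod_i\ell_{ii}^{\,\beta(n-i+1)-1-(\beta(m-i)+1)}=\prod_i\ell_{ii}^{\,\beta(n-m+1)-2}=|\mathbf{S}|^{\beta(n-m+1)/2-1}$, and the constants collapse to $2^{-m}$, which is exactly (\ref{w}). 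An equivalent route exploiting material already in hand is to start from Lemma \ref{lemsvd}: as $\mathbf{S}=\mathbf{X}\mathbf{X}^{*}$ absorbs the $m\times m$ unitary factor $\mathbf{W}$ and the singular values as $\lambda_{i}=d_{i}^{2}$, one repackages $(d\mathbf{D})(\mathbf{W}\,d\mathbf{W}^{*})$ into $(d\mathbf{S})$ through the spectral-decomposition Jacobian of $\mathfrak{P}_{m}^{\beta}$; the substitution $(d\mathbf{\Lambda})=2^{m}\prod_{i}d_{i}\,(d\mathbf{D})$ together with $|\mathbf{S}|=\prod_i d_i^2$ makes the powers of $d_{i}$ and the factors $\pi^{\tau}$, $2^{\pm m}$ cancel exactly, leaving $(\mathbf{V}_{1}\,d\mathbf{V}_{1}^{*})$ as the only free form.

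The routine part is the wedge-product bookkeeping; the delicate point is matching the $\beta$-weighted exponents so that the diagonal powers cancel to give precisely $\beta(n-m+1)/2-1$ and the constant reduces to exactly $2^{-m}$. The genuine obstacle is justifying the two $\beta$-Jacobians (the triangular factorization of $\mathbf{X}$, and the Cholesky or spectral Jacobian of $\mathbf{S}$) uniformly across all four algebras: for the octonions the non-associativity obstructs the usual rearrangement of matrix products, which is precisely why such identities are drawn from \citet{k:84} and \citet{d:02} and why the octonion instance is carried only as a conjecture.
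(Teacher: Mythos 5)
Your derivation is correct, but be aware that the paper itself contains no proof of Lemma \ref{lemW}: it is stated in the Appendix as one of the Jacobians ``based on the work of'' \citet{k:84} and \citet{d:02}, proposed as a $\beta$-extension of the classical real, complex and quaternion results (\citet{j:64}, \citet{k:65}, \citet{lx:09}). Your first route --- the factorization $\mathbf{X}=\mathbf{L}\mathbf{V}_{1}$ with
\begin{equation*}
(d\mathbf{X})=\prod_{i=1}^{m}\ell_{ii}^{\beta(n-i+1)-1}\,(d\mathbf{L})(\mathbf{V}_{1}d\mathbf{V}_{1}^{*}),
\qquad
(d\mathbf{S})=2^{m}\prod_{i=1}^{m}\ell_{ii}^{\beta(m-i)+1}\,(d\mathbf{L}),
\end{equation*}
--- is exactly the standard argument underlying the cited sources (for $\beta=1$ it is the transposed form of Muirhead's Theorems 2.1.9 and 2.1.14), and your exponent bookkeeping checks out: $[\beta(n-i+1)-1]-[\beta(m-i)+1]=\beta(n-m+1)-2$ uniformly in $i$, so with $|\mathbf{S}|=\prod_{i}\ell_{ii}^{2}$ you obtain $2^{-m}|\mathbf{S}|^{\beta(n-m+1)/2-1}$, which is (\ref{w}). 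Two remarks. First, your alternative route via Lemma \ref{lemsvd} imports a spectral-decomposition Jacobian for $\mathbf{S}=\mathbf{W}\mathbf{\Lambda}\mathbf{W}^{*}$ that is nowhere in this paper; the claimed exact cancellation of $\pi^{\tau}$ and of the factors $\prod_{i<j}(\lambda_{i}-\lambda_{j})^{\beta}$ holds only if that lemma carries the matching normalization $2^{-m}\pi^{\tau}$ with the \emph{same} $\tau$ (it does, since both constants arise from the same torus normalization of $\mathbf{W}$), but as written you assert rather than verify this, so the triangular route should be regarded as the actual proof and the SVD route as a consistency check. Second, your closing caveat matches the paper's own stance: the constituent $\beta$-Jacobians are established for $\beta=1,2,4$ and carried only as conjectures for $\beta=8$ because octonionic non-associativity obstructs the factorization arguments, which is precisely why the authors cite \citet{k:84} and \citet{d:02} instead of proving the lemma.
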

\end{document}